\theoremstyle{plain}
\newtheorem{theorem}{Theorem}[section]
\newtheorem{lemma}[theorem]{Lemma}
\newtheorem{corollary}[theorem]{Corollary}
\newtheorem{proposition}[theorem]{Proposition}
\newtheorem{observation}[theorem]{Observation}
\newtheorem{example}[theorem]{Example}
\newtheorem{remark}[theorem]{Remark}
\newtheorem{question}[theorem]{Question}
\theoremstyle{definition}
\newcommand{\adim}{\textnormal{adim}}
\newcommand{\diam}{\textnormal{diam}}
\newcommand{\code}{\textnormal{code}}
\def\finf{\mathop{{\rm I}\kern -.27 em {\rm F}}\nolimits}
\date{}
\begin{document}

\title{Distance-$k$ locating-dominating sets in graphs}

\author{{\bf{Cong X. Kang}}$^1$ and {\bf{Eunjeong Yi}}$^2$\\
\small Texas A\&M University at Galveston, Galveston, TX 77553, USA\\
{\small\em kangc@tamug.edu}$^1$; {\small\em yie@tamug.edu}$^2$}

\maketitle

\begin{abstract}
Let $G$ be a graph with vertex set $V$, and let $k$ be a positive integer. A set $D \subseteq V$ is a \emph{distance-$k$ dominating set} of $G$ if, for each vertex $u \in V-D$, there exists a vertex $w\in D$ such that $d(u,w) \le k$, where $d(u,w)$ is the minimum number of edges linking $u$ and $w$ in $G$. Let $d_k(x, y)=\min\{d(x,y), k+1\}$. A set $R\subseteq V$ is a \emph{distance-$k$ resolving set} of $G$ if, for any pair of distinct $x,y\in V$, there exists a vertex $z\in R$ such that $d_k(x,z) \neq d_k(y,z)$. The \emph{distance-$k$ domination number} $\gamma_k(G)$ (\emph{distance-$k$ dimension} $\dim_k(G)$, respectively) of $G$ is the minimum cardinality of all distance-$k$ dominating sets (distance-$k$ resolving sets, respectively) of $G$. The \emph{distance-$k$ location-domination number}, $\gamma_L^k(G)$, of $G$ is the minimum cardinality of all sets $S\subseteq V$ such that $S$ is both a distance-$k$ dominating set and a distance-$k$ resolving set of $G$. Note that $\gamma_L^1(G)$ is the well-known location-domination number introduced by Slater in 1988. For any connected graph $G$ of order $n\ge 2$, we obtain the following sharp bounds: (1) $\gamma_k(G) \le \dim_k(G)+1$; (2) $2\le\gamma_k(G)+\dim_k(G) \le n$; (3) $1\le \max\{\gamma_k(G), \dim_k(G)\} \le \gamma_L^k(G) \le \min\{\dim_k(G)+1, n-1\}$. We characterize $G$ for which $\gamma_L^k(G)\in\{1, |V|-1\}$. We observe that $\frac{\dim_k(G)}{\gamma_k(G)}$ can be arbitrarily large. Moreover, for any tree $T$ of order $n\ge 2$, we show that $\gamma_L^k(T)\le n-ex(T)$, where $ex(T)$ denotes the number of exterior major vertices of $T$, and we characterize trees $T$ achieving equality. We also examine the effect of edge deletion on the distance-$k$ location-domination number of graphs. 
\end{abstract}

\noindent\small {\bf{Keywords:}} domination number, metric dimension, locating-dominating set, distance-$k$ locating-dominating set, $(s,t)$-locating-dominating set

\noindent \small {\bf{2010 Mathematics Subject Classification:}} 05C12, 05C69\\


\section{Introduction}

Let $G$ be a finite, simple, undirected, and connected graph with vertex set $V(G)$ and edge set $E(G)$. Let $k$ be a positive integer. For $x,y\in V(G)$, let $d(x, y)$ denote the length of a shortest path between $x$ and $y$ in $G$, and let $d_k(x,y)=\min\{d(x,y), k+1\}$. The \emph{diameter}, $\diam(G)$, of a graph $G$ is $\max\{d(x,y): x,y \in V(G)\}$. For $v\in V(G)$ and $S \subseteq V(G)$, let $d(v,S)=\min\{d(v,w):w\in S\}$. The \emph{open neighborhood} of a vertex $v \in V(G)$ is $N(v)=\{u \in V(G) : uv \in E(G)\}$ and its \emph{closed neighborhood} is $N[v]=N(v) \cup \{v\}$. More generally, for $v\in V(G)$, let $N^k[v]=\{u\in V(G): d(u, v)\le k\}$. The \emph{degree} of a vertex $v \in V(G)$ is $|N(v)|$. For distinct $x,y\in V(G)$, $x$ and $y$ are called \emph{twin vertices} if $N(x)-\{y\}=N(y)-\{x\}$ in $G$. A \emph{major vertex} is a vertex of degree at least three, a \emph{leaf} (also called an \emph{end-vertex}) is a vertex of degree one, and a \emph{support vertex} is a vertex that is adjacent to a leaf. A leaf $\ell$ is called a \emph{terminal vertex} of a major vertex $v$ if $d(\ell,v) < d(\ell,w)$ for every other major vertex $w$ in $G$. The terminal degree, $ter(v)$, of a major vertex $v$ is the number of terminal vertices of $v$ in $G$. A major vertex $v$ is an \emph{exterior major vertex} if it has positive terminal degree. We denote the number of exterior major vertices of $G$ by $ex(G)$ and the number of leaves of $G$ by $\sigma(G)$. We denote by $\overline{G}$ the complement of $G$, i.e., $V(\overline{G})=V(G)$ and $xy\in E(\overline{G})$ if and only if $xy \not\in E(G)$ for any distinct vertices $x$ and $y$ in $G$. The \emph{join} of two graphs $G$ and $H$, denoted by $G+H$, is the graph obtained from the disjoint union of $G$ and $H$ by joining an edge between each vertex of $G$ and each vertex of $H$. Let $P_n$, $C_n$, and $K_n$ denote respectively the path, the cycle, and the complete graph on $n$ vertices; let $K_{s, n-s}$ denote the complete bi-partite graph on $n$ vertices with parts of sizes $s$ and $n-s$. Let $\mathbb{Z}^+$ be the set of positive integers and $k\in\mathbb{Z}^+$. For $\alpha\in \mathbb{Z}^+$, let $[\alpha]=\{1,2,\ldots, \alpha\}$.

A vertex subset $D \subseteq V(G)$ is a \emph{distance-$k$ dominating set} of $G$ if, for each vertex $u \in V(G)-D$, there exists a vertex $w\in D$ such that $d(u,w) \le k$. The \emph{distance-$k$ domination number}, $\gamma_k(G)$, of $G$ is the minimum cardinality over all distance-$k$ dominating sets of $G$. The concept of distance-$k$ domination was introduced by Meir and Moon~\cite{distKdom}. We note that $\gamma_1(G)$ is the well-known domination number of $G$, which is often denoted by $\gamma(G)$ in the literature. Applications of domination can be found in resource allocation on a network, determining efficient routes within a network, and designing secure systems for electrical grids, to name a few. It is known that determining the domination number of a general graph is an NP-hard problem (see~\cite{NPcompleteness}). For a survey on domination in graphs, see \cite{Dombook2}. 

A vertex subset $R \subseteq V(G)$ is a \emph{resolving set} of $G$ if, for any pair of distinct vertices $x,y\in V(G)$, there exists a vertex $z\in R$ such that $d(x,z) \neq d(y,z)$. The \emph{metric dimension}, $\dim(G)$, of $G$ is the minimum cardinality over all resolving sets of $G$. The concept of metric dimension was introduced independently by Slater~\cite{Slater} and by Harary and Melter \cite{HM}. A vertex subset $S\subseteq V(G)$ is a \emph{distance-$k$ resolving set} (also called a \emph{$k$-truncated resolving set}) of $G$ if, for any distinct vertices $x,y\in V(G)$, there exists a vertex $z\in S$ such that $d_k(x,z) \neq d_k(y,z)$. The \emph{distance-$k$ dimension} (also called the \emph{$k$-truncated dimension}), $\dim_k(G)$, of $G$ is the minimum cardinality over all distance-$k$ resolving sets of $G$. The metric dimension of a metric space $(V, d_k)$ is studied in~\cite{beardon}. The distance-$k$ dimension corresponds to the $(1, k+1)$-metric dimension in~\cite{moreno1} and~\cite{moreno2}. We note that $\dim_1(G)$ is also called the adjacency dimension, introduced in~\cite{adim}, and it is often denoted by $\adim(G)$ in the literature. For detailed results on $\dim_k(G)$, we refer to~\cite{distKdim}, which is a merger of~\cite{JE} and~\cite{tilquist}, along with some additional results. For an ordered set $S=\{u_1, u_2, \ldots, u_{\alpha}\} \subseteq V(G)$ of distinct vertices, the distance-$k$ metric code of $v \in V(G)$ with respect to $S$, denoted by $\code_{S,k}(v)$, is the $\alpha$-vector $(d_k(v, u_1), d_k(v, u_2), \ldots, d_k(v, u_{\alpha}))$. We denote by $({\bf k+1})_{\alpha}$ the $\alpha$-vector with $k+1$ on each entry. Applications of metric dimension can be found in robot navigation, network discovery and verification, and combinatorial optimization, to name a few. It is known that determining the metric dimension and the adjacency dimension of a general graph are NP-hard problems (see~\cite{landmarks} and~\cite{Juan}). For a discussion on computational complexity of the distance-$k$ dimension of graphs, see~\cite{moreno2}.

Slater~\cite{loc_dom} introduced the notion of locating-dominating set and location-domination number. A set $A\subseteq V(G)$ is a \emph{locating-dominating set} of $G$ if $A$ is a dominating set of $G$ and $N(x) \cap A \neq N(y) \cap A$ for distinct vertices $x,y\in V(G)-A$. The \emph{location-domination number}, $\gamma_L(G)$, of $G$ is the minimum cardinality over all locating-dominating sets of $G$.  The notion of location-domination by Slater is a natural marriage of its two constituent notions, where a subset of vertices functions both to locate (via $d_1$ metric) each node of a network and to dominate (supply or support) the entire network.  Viewed in this light, the following is but a natural extension of the notion of Slater. For $(s,t)\in\mathbb{Z}^+\times\mathbb{Z}^+$, let $S\subseteq V(G)$ be a distance-$s$ resolving set of $G$ and a distance-$t$ dominating set of $G$, which we call an $(s,t)$-locating-dominating set of $G$. Then the $(s,t)$-location-domination number of $G$, denoted by $\gamma^{(s,t)}_L\!(G)$, is defined to be the minimum cardinality of $S$ as $S$ varies over all $(s,t)$-locating-dominating sets of $G$. When $s=k=t$, we will abbreviate and simply speak of distance-$k$ locating-dominating set and distance-$k$ location-domination number, and we will simplify $\gamma^{(k,k)}_L\!(G)$ to $\gamma_L^k(G)$.

In this paper, we study the distance-$k$ location-domination number of graphs. We examine the relationship among $\gamma_k(G)$, $\dim_k(G)$ and $\gamma_L^k(G)$. Let $G$ be a connected graph of order $n\ge2$, and let $k\in\mathbb{Z}^+$. In Section~\ref{sec_kdom_kdim}, we show that $\gamma_k(G) \le \dim_k(G)+1$ and that $\dim_k(G)-\gamma_k(G)$ can be arbitrarily large. We also show that $2\le \gamma_k(G)+\dim_k(G) \le n$, and we characterize $G$ satisfying $\gamma_k(G)+\dim_k(G)=2$. In Section~\ref{sec_distK_loc_dom}, we show that $1\le \max\{\gamma_k(G), \dim_k(G)\} \le \gamma_L^k(G) \le \min\{\dim_k(G)+1, n-1\}$, where the bounds are sharp. We also characterize $G$ satisfying $\gamma_L^k(G)$ equals $1$ and $n-1$, respectively. Moreover, for a non-trivial tree $T$, we show that $\gamma_L^k(T)\le n-ex(T)$ and we characterize trees $T$ achieving equality. In Section~\ref{sec_graphs}, we determine $\gamma_L^k(G)$ when $G$ is the Petersen graph, a complete multipartite graph, a cycle or a path. In Section~\ref{sec_edge}, we examine the effect of edge deletion on the distance-$k$ location-domination number of graphs.


\section{Relations between $\gamma_k(G)$ and $\dim_k(G)$}\label{sec_kdom_kdim}

In this section, we examine the sum and difference between $\gamma_k(G)$ and $\dim_k(G)$. Let $G$ be a non-trivial connected graph, and let $k\in\mathbb{Z}^+$. We show that $\gamma_k(G)\le \dim_k(G)+1$, where the bound is sharp, and we observe that $\dim_k(G)-\gamma_k(G)$ can be arbitrarily large. We also show that $2\le \gamma_k(G)+\dim_k(G) \le |V(G)|$, and we characterize $G$ satisfying $\gamma_k(G)+\dim_k(G) =2$. We begin with the following observation.

\begin{observation}\label{obs_mixed}
Let $G$ be any connected graph, and let $s,s',t,t',k,k'\in\mathbb{Z}^+$. Then 
\begin{itemize} 
\item[(a)] for $k>k'$, $\gamma_k(G)\le \gamma_{k'}(G)\le \gamma_1(G)$; 
\item[(b)] \emph{\cite{beardon, moreno1, moreno2}} for $k>k'$, $\dim(G) \le \dim_k(G)\le\dim_{k'}(G) \le \dim_1(G)$;
\item[(c)] more generally, we have $\gamma^{(s,t)}_L\!(G)\geq \gamma^{(s',t')}_L\!(G)$ for $s\leq s'$ and $t\leq t'$, since an $(s,t)$-locating-dominating set of $G$ is an $(s',t')$-locating-dominating set of $G$.
\end{itemize}
\end{observation}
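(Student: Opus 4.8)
The plan is to reduce all three parts to the monotonicity of the truncated distance $d_k(x,y)=\min\{d(x,y),k+1\}$ in the parameter $k$, exploited at the level of the defining set-properties rather than the numerical invariants themselves.

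For part (a), the key remark is that a single domination witness survives as $k$ grows: if $w\in D$ satisfies $d(u,w)\le k'$ and $k\ge k'$, then $d(u,w)\le k$ as well. Hence every distance-$k'$ dominating set is automatically a distance-$k$ dominating set, so the family of distance-$k'$ dominating sets is contained in the family of distance-$k$ dominating sets, and taking minima over these nested families yields $\gamma_k(G)\le\gamma_{k'}(G)$. The right-hand inequality $\gamma_{k'}(G)\le\gamma_1(G)$ is the same statement applied with the base value $1\le k'$.

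Part (b) is the analogous monotonicity for resolving sets, which I would simply attribute to \cite{beardon, moreno1, moreno2}; if one wishes to reprove it, the essential lemma is that a distance-$k'$ resolving set $S$ is a distance-$k$ resolving set whenever $k\ge k'$. To see this, fix distinct $x,y$ and pick $z\in S$ with $d_{k'}(x,z)\neq d_{k'}(y,z)$. A short case check on whether $d(x,z)$ and $d(y,z)$ lie below or above the truncation threshold $k'+1$ shows that the same $z$ still separates $x$ and $y$ under $d_k$: the two true distances cannot both exceed $k'$ (else the $d_{k'}$-values would coincide at $k'+1$), so at least one equals its $d_k$-value, which is at most $k'\le k$, while the other has $d_k$-value at least $k'+1$, keeping the two distinct. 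This truncation case analysis is the only step requiring care, though it is routine.

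For part (c), I would combine the two preceding monotonicities. Let $S$ be any $(s,t)$-locating-dominating set, so $S$ is simultaneously a distance-$s$ resolving set and a distance-$t$ dominating set. Given $s\le s'$ and $t\le t'$, the resolving lemma underlying (b) upgrades $S$ to a distance-$s'$ resolving set, and the domination remark from (a) upgrades $S$ to a distance-$t'$ dominating set; hence $S$ is an $(s',t')$-locating-dominating set. Thus every $(s,t)$-locating-dominating set is also an $(s',t')$-locating-dominating set, so the family over which we minimize for $\gamma^{(s',t')}_L\!(G)$ contains the family for $\gamma^{(s,t)}_L\!(G)$, forcing $\gamma^{(s,t)}_L\!(G)\ge\gamma^{(s',t')}_L\!(G)$. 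No step presents a genuine obstacle; the only subtlety is the truncation case analysis behind (b), and since that part is cited, the whole argument is essentially immediate.
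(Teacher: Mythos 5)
Your proposal is correct and takes essentially the same approach as the paper, which records this as an observation: part (b) is cited to the literature, and part (c) is justified by exactly the containment you describe, namely that every $(s,t)$-locating-dominating set is an $(s',t')$-locating-dominating set, so that $\gamma_L^{(s',t')}(G)$ is a minimum over a larger family. One minor imprecision in your sketch of (b): when both $d(x,z)\le k'$ and $d(y,z)\le k'$, the ``other'' vertex does not have $d_k$-value at least $k'+1$ as you assert, but in that subcase both $d_k$-values equal the true distances and remain distinct, so the monotonicity claim still holds.
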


For any minimum distance-$k$ resolving set $S$ of a connected graph $G$, we show that there is a vertex $v\in V(G)-S$ such that $S\cup\{v\}$ is a distance-$k$ dominating set of $G$.

\begin{proposition}\label{dom_upper}
For any non-trivial connected graph $G$ and for any $k\in\mathbb{Z}^+$, $$\gamma_k(G)\le \dim_k(G)+1.$$
\end{proposition}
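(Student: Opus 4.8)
The plan is to take any minimum distance-$k$ resolving set $S$ of $G$, so $|S| = \dim_k(G)$, and show we can augment it by at most one vertex to obtain a distance-$k$ dominating set. First I would examine which vertices of $V(G) - S$ fail to be distance-$k$ dominated by $S$; call this set $U = \{u \in V(G) - S : d(u, S) > k\}$. If $U = \emptyset$, then $S$ itself is already a distance-$k$ dominating set and $\gamma_k(G) \le |S| = \dim_k(G) < \dim_k(G)+1$, so we are done immediately. Thus the interesting case is $U \neq \emptyset$.

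\smallskip

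The key observation I would exploit is that $S$, being a distance-$k$ resolving set, assigns distinct distance-$k$ codes $\code_{S,k}(v)$ to all vertices. For a vertex $u \in U$, every coordinate $d_k(u, w)$ with $w \in S$ equals $k+1$, since $d(u,w) > k$ for all $w \in S$; that is, $\code_{S,k}(u) = (\mathbf{k+1})_{\alpha}$ where $\alpha = |S|$. Because codes are distinct, there can be \emph{at most one} vertex of $V(G) - S$ whose code is the all-$(k+1)$ vector. Hence $|U| \le 1$. This is the crux of the argument: the resolving property forces $U$ to be a singleton (when nonempty), because any two vertices both at distance greater than $k$ from all of $S$ would share the identical code $(\mathbf{k+1})_{\alpha}$, contradicting that $S$ resolves them.

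\smallskip

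So write $U = \{v\}$ for the unique undominated vertex. I would then set $S' = S \cup \{v\}$ and verify that $S'$ is a distance-$k$ dominating set: every vertex in $V(G) - S$ other than $v$ is within distance $k$ of $S \subseteq S'$ by the definition of $U$, and $v$ itself lies in $S'$, so every vertex of $V(G) - S'$ is distance-$k$ dominated. Therefore $\gamma_k(G) \le |S'| = |S| + 1 = \dim_k(G) + 1$, completing the proof.

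\smallskip

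I do not anticipate a serious obstacle here; the only subtlety to state carefully is the translation between "$u$ is not distance-$k$ dominated by $S$" and "$\code_{S,k}(u) = (\mathbf{k+1})_{\alpha}$," which relies precisely on the truncation $d_k = \min\{d, k+1\}$ capping each coordinate at $k+1$ exactly when the true distance exceeds $k$. Once that equivalence is pinned down, the counting step bounding $|U|$ by $1$ is immediate from the resolving property, and the augmentation is routine.
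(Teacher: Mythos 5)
Your proposal is correct and follows essentially the same argument as the paper: take a minimum distance-$k$ resolving set $S$, observe that at most one vertex can have the code $(\mathbf{k+1})_{|S|}$ (hence at most one vertex is left undominated), and adjoin that vertex if it exists. The only difference is that you spell out the equivalence between ``undominated by $S$'' and ``all-$(k+1)$ code'' more explicitly, which the paper leaves as a one-line remark.
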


\begin{proof}
Let $S$ be any minimum distance-$k$ resolving set of $G$. Then there exists at most one vertex, say $w$, in $V(G)-S$ such that $d(w,S)>k$; notice that $\code_{S,k}(w)=(\textbf{k+1})_{|S|}$. If $d(u,S)\le k$ for each $u\in V(G)$, then $S$ is a distance-$k$ dominating set of $G$, and hence $\gamma_k(G)\le |S|=\dim_k(G)$. If there exists a vertex $v\in V(G)$ such that $d(v,S)>k$, then $S \cup \{v\}$ forms a distance-$k$ dominating set of $G$, and thus $\gamma_k(G)\le |S|+1=\dim_k(G)+1$.~\hfill
\end{proof}

Next, we show the sharpness of the bound in Proposition~\ref{dom_upper}. 

\begin{observation}\label{obs_dom}
Let $G$ be a non-trivial connected graph. 
\begin{itemize}
\item[(a)] If there exists a vertex $v\in V(G)$ such that $N^k[v]=V(G)$, then $\{v\}$ is a distance-$k$ dominating set of $G$ and $\gamma_k(G)=1$.
\item[(b)] Suppose $\cup_{i=1}^{x}\{v_i\} \subseteq V(G)$ satisfies $N^k[v_i] \cap N^k[v_j]=\emptyset$ for $i\neq j$. Then any distance-$k$ dominating set of $G$ must contain a vertex of $N^k[v_i]$ for each $i\in[x]$. Thus $\gamma_k(G) \ge x$.
\end{itemize}
\end{observation}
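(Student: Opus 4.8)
The plan is to prove each part directly from the definitions, since both statements are essentially unpackings of what it means to be a distance-$k$ dominating set. For part (a), I would begin with the hypothesis $N^k[v]=V(G)$: by definition $N^k[v]=\{u\in V(G): d(u,v)\le k\}$, so every vertex $u\in V(G)$, and in particular every $u\in V(G)-\{v\}$, satisfies $d(u,v)\le k$. This is precisely the condition that $\{v\}$ is a distance-$k$ dominating set of $G$, giving $\gamma_k(G)\le 1$. Since $G$ is non-trivial (order $\ge 2$), any distance-$k$ dominating set must be nonempty, so $\gamma_k(G)\ge 1$, and equality $\gamma_k(G)=1$ follows.

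For part (b), the idea is to show that every distance-$k$ dominating set $D$ must meet each ball $N^k[v_i]$, and then to invoke pairwise disjointness to count. Fix $i\in[x]$ and let $D$ be an arbitrary distance-$k$ dominating set. If $v_i\in D$, then $v_i\in D\cap N^k[v_i]$, since $v_i\in N^k[v_i]$ trivially. Otherwise $v_i\in V(G)-D$, so by the domination condition there exists $w\in D$ with $d(v_i,w)\le k$, i.e.\ $w\in N^k[v_i]\cap D$. In either case $D\cap N^k[v_i]\neq\emptyset$, which establishes the first assertion of (b).

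To obtain the cardinality bound, I would then use the hypothesis $N^k[v_i]\cap N^k[v_j]=\emptyset$ for $i\neq j$: choosing for each $i\in[x]$ some vertex $w_i\in D\cap N^k[v_i]$ yields $x$ vertices $w_1,\ldots,w_x$ lying in pairwise disjoint sets, hence pairwise distinct, all belonging to $D$. Therefore $|D|\ge x$, and since $D$ was an arbitrary distance-$k$ dominating set, $\gamma_k(G)\ge x$.

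As for obstacles, this statement is a direct consequence of the definitions, so I do not anticipate any genuine difficulty. The only points requiring a moment's care are in part (b): one must not forget the case in which $v_i$ itself lies in $D$ (so that $D$ need not contain a vertex of $N^k[v_i]$ distinct from $v_i$), and one must note that it is exactly the pairwise disjointness of the balls that upgrades the statement ``$D$ meets each $N^k[v_i]$'' into the cardinality bound $\gamma_k(G)\ge x$.
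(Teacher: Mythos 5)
Your proof is correct; the paper states this as an Observation without providing a proof, and your argument is exactly the direct unpacking of the definitions that the paper implicitly relies on. The case split in part (b) (whether $v_i\in D$ or not) and the use of pairwise disjointness to get $|D|\ge x$ are handled properly.
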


\begin{remark}\label{remark_kdom_kdim}
For each $k\in\mathbb{Z}^+$, there is a connected graph $G$ with $\gamma_k(G)=\dim_k(G)+1$. 
\end{remark}

\begin{proof}
Let $G$ be a tree with $ex(G)=x\ge 1$ such that $v_1, v_2, \ldots, v_x$ are the exterior major vertices of $G$ with $ter(v_i)=\alpha\ge3$ for each $i\in[x]$, and let $v_1, v_2, \ldots, v_x$ form an induced path of order $x$ in $G$. For each $i\in[x]$, let $\{\ell_{i,1}, \ell_{i,2}, \ldots, \ell_{i, \alpha}\}$ be the set of the terminal vertices of $v_i$ in $G$ such that $d(v_i, \ell_{i,j})=k+1=1+d(v_i, \ell_{i,\alpha})$ for each $j\in[\alpha-1]$. For each $i\in[x]$ and for each $j\in[\alpha-1]$, let $s_{i,j}$ be the neighbor of $v_i$ lying on the $v_i-\ell_{i, j}$ path in $G$. See Fig.~\ref{fig_ex1} when $k=3$. 

First, we note that $\gamma_k(G)=x\alpha$: (i) $\gamma_k(G)\le x\alpha$ since $D=\cup_{i=1}^{x}\{\ell_{i,1}, \ldots, \ell_{i, \alpha}\}$ forms a distance-$k$ dominating set of $G$ with $|D|=x\alpha$; (ii) $\gamma_k(G)\ge x\alpha$ by Observation~\ref{obs_dom}(b) and the fact that $N^k[\ell_{i,j}]\cap N^k[\ell_{s,t}]=\emptyset$ for $(i,j)\neq (s,t)$. Second, we note that $\dim_k(G)=x\alpha-1$: (i) $\dim_k(G)\le x\alpha-1$ since $R=(\cup_{i=1}^{x}\{s_{i,1}, s_{i,2}, \ldots, s_{i, \alpha-1}\}) \cup (\cup_{i=1}^{x-1}\{\ell_{i, \alpha}\})$ forms a distance-$k$ resolving set of $G$ with $|R|=x\alpha-1$; (ii) $\dim_k(G) \ge \gamma_k(G)-1=x\alpha-1$ by Proposition~\ref{dom_upper}. Therefore, $\gamma_k(G)=x\alpha=\dim_k(G)+1$.~\hfill
\end{proof}

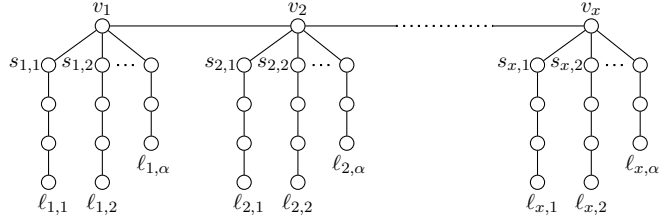
\begin{figure}[ht]
\centering
\begin{tikzpicture}[scale=.65, transform shape]

\node [draw, shape=circle, scale=.8] (1) at  (0.5,0) {};
\node [draw, shape=circle, scale=.8] (a11) at  (-0.6,-0.8) {};
\node [draw, shape=circle, scale=.8] (a12) at  (-0.6,-1.6) {};
\node [draw, shape=circle, scale=.8] (a13) at  (-0.6,-2.4) {};
\node [draw, shape=circle, scale=.8] (a14) at  (-0.6,-3.2) {};
\node [draw, shape=circle, scale=.8] (a21) at  (0.5,-0.8) {};
\node [draw, shape=circle, scale=.8] (a22) at  (0.5,-1.6) {};
\node [draw, shape=circle, scale=.8] (a23) at  (0.5,-2.4) {};
\node [draw, shape=circle, scale=.8] (a24) at  (0.5,-3.2) {};
\node [draw, shape=circle, scale=.8] (a31) at  (1.5,-0.8) {};
\node [draw, shape=circle, scale=.8] (a32) at  (1.5,-1.6) {};
\node [draw, shape=circle, scale=.8] (a33) at  (1.5,-2.4) {};

\draw(a14)--(a13)--(a12)--(a11)--(1)--(a21)--(a22)--(a23)--(a24);\draw(1)--(a31)--(a32)--(a33);\draw[thick, dotted](0.8,-0.8)--(1.2, -0.8);

\node [draw, shape=circle, scale=.8] (2) at  (4.5,0) {};
\node [draw, shape=circle, scale=.8] (b11) at  (3.4,-0.8) {};
\node [draw, shape=circle, scale=.8] (b12) at  (3.4,-1.6) {};
\node [draw, shape=circle, scale=.8] (b13) at  (3.4,-2.4) {};
\node [draw, shape=circle, scale=.8] (b14) at  (3.4,-3.2) {};
\node [draw, shape=circle, scale=.8] (b21) at  (4.5,-0.8) {};
\node [draw, shape=circle, scale=.8] (b22) at  (4.5,-1.6) {};
\node [draw, shape=circle, scale=.8] (b23) at  (4.5,-2.4) {};
\node [draw, shape=circle, scale=.8] (b24) at  (4.5,-3.2) {};
\node [draw, shape=circle, scale=.8] (b31) at  (5.5,-0.8) {};
\node [draw, shape=circle, scale=.8] (b32) at  (5.5,-1.6) {};
\node [draw, shape=circle, scale=.8] (b33) at  (5.5,-2.4) {};

\draw(b14)--(b13)--(b12)--(b11)--(2)--(b21)--(b22)--(b23)--(b24);\draw(2)--(b31)--(b32)--(b33);\draw[thick, dotted](4.8,-0.8)--(5.2, -0.8);

\node [draw, shape=circle, scale=.8] (3) at  (10.5,0) {};
\node [draw, shape=circle, scale=.8] (c11) at  (9.4,-0.8) {};
\node [draw, shape=circle, scale=.8] (c12) at  (9.4,-1.6) {};
\node [draw, shape=circle, scale=.8] (c13) at  (9.4,-2.4) {};
\node [draw, shape=circle, scale=.8] (c14) at  (9.4,-3.2) {};
\node [draw, shape=circle, scale=.8] (c21) at  (10.5,-0.8) {};
\node [draw, shape=circle, scale=.8] (c22) at  (10.5,-1.6) {};
\node [draw, shape=circle, scale=.8] (c23) at  (10.5,-2.4) {};
\node [draw, shape=circle, scale=.8] (c24) at  (10.5,-3.2) {};
\node [draw, shape=circle, scale=.8] (c31) at  (11.5,-0.8) {};
\node [draw, shape=circle, scale=.8] (c32) at  (11.5,-1.6) {};
\node [draw, shape=circle, scale=.8] (c33) at  (11.5,-2.4) {};

\draw(c14)--(c13)--(c12)--(c11)--(3)--(c21)--(c22)--(c23)--(c24);\draw(3)--(c31)--(c32)--(c33);\draw[thick, dotted](10.8,-0.8)--(11.2, -0.8);

\draw(1)--(2)--(6.5,0);\draw[thick, dotted](6.5,0)--(8.5, 0);\draw(8.5,0)--(3);

\node [scale=1.15] at (0.5,0.35) {$v_1$};
\node [scale=1.15] at (4.5,0.35) {$v_2$};
\node [scale=1.15] at (10.5,0.35) {$v_x$};

\node [scale=1.15] at (-1.08,-0.8) {$s_{1,1}$};
\node [scale=1.15] at (0,-0.8) {$s_{1,2}$};
\node [scale=1.15] at (-0.5,-3.65) {$\ell_{1,1}$};
\node [scale=1.15] at (0.5,-3.65) {$\ell_{1,2}$};
\node [scale=1.15] at (1.55,-2.85) {$\ell_{1,\alpha}$};

\node [scale=1.15] at (2.92,-0.8) {$s_{2,1}$};
\node [scale=1.15] at (4,-0.8) {$s_{2,2}$};
\node [scale=1.15] at (3.5,-3.65) {$\ell_{2,1}$};
\node [scale=1.15] at (4.5,-3.65) {$\ell_{2,2}$};
\node [scale=1.15] at (5.55,-2.85) {$\ell_{2,\alpha}$};

\node [scale=1.15] at (8.92,-0.8) {$s_{x,1}$};
\node [scale=1.15] at (10,-0.8) {$s_{x,2}$};
\node [scale=1.15] at (9.5,-3.65) {$\ell_{x,1}$};
\node [scale=1.15] at (10.5,-3.65) {$\ell_{x,2}$};
\node [scale=1.15] at (11.55,-2.85) {$\ell_{x,\alpha}$};

\end{tikzpicture}
\caption{Graphs $G$ with $\gamma_3(G)=\dim_3(G)+1$.}\label{fig_ex1}
\end{figure}

Based on Proposition~\ref{dom_upper} and Remark~\ref{remark_kdom_kdim}, we have the following

\begin{question}
Can we characterize graphs $G$ satisfying $\gamma_k(G)=\dim_k(G)+1$? 
\end{question}

\begin{question}
Can we characterize graphs $G$ satisfying $\gamma_k(G)=\dim_k(G)$? 
\end{question}

Next, we show that $\frac{\dim_k(G)}{\gamma_k(G)}$ can be arbitrarily large; thus, $\dim_k(G)-\gamma_k(G)$ can be arbitrarily large. We recall the connected graphs $G$ of order $n$ for which $\dim_k(G)\in\{1,n-2, n-1\}$; here, we note that Theorem~\ref{kdim_characterization}(a),(d) for the case $k=1$ is obtained in~\cite{adim}. See~\cite{broadcast} for a characterization of all graphs $G$ having $\dim_1(G)=m$ for each $m\in\mathbb{Z}^+$.\\

\begin{theorem}\label{kdim_characterization}
Let $G$ be a connected graph of order $n\ge2$, and let $k\in\mathbb{Z}^+$. Then $1\le\dim_k(G)\le n-1$, and we have the following:
\begin{itemize}
\item[(a)] \emph{\cite{moreno1}} $\dim_k(G)=1$ if and only if $G\in\cup_{i=2}^{k+2}\{P_i\}$;
\item[(b)] \emph{\cite{distKdim}} for $n\ge4$, $\dim_1(G)=n-2$ if and only if $G=P_4$, $G=K_{s,t}$ ($s,t\ge1$), $G=K_s+\overline{K}_t$ ($s\ge 1, t\ge2$), or $G=K_s+(K_1\cup K_t)$ ($s,t\ge 1$);
\item[(c)] \emph{\cite{distKdim}} for $k\ge 2$ and for $n\ge4$, $\dim_k(G)=n-2$ if and only if $G=K_{s,t}$ ($s,t\ge1$), $G=K_s+\overline{K}_t$ ($s\ge 1, t\ge2$), or $G=K_s+(K_1\cup K_t)$ ($s,t\ge 1$);
\item[(d)] \emph{\cite{distKdim}} $\dim_k(G)=n-1$ if and only if $G=K_n$.
\end{itemize}
\end{theorem}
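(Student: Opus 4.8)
The four parts all concern extreme values of $\dim_k$, so I would organize the argument around one structural notion: call distinct $x,y\in V(G)$ \emph{$d_k$-twins} if $d_k(x,z)=d_k(y,z)$ for every $z\in V(G)\setminus\{x,y\}$. The plan is to first record the universal bounds. The bound $\dim_k(G)\ge 1$ is trivial for $n\ge 2$, and $\dim_k(G)\le n-1$ holds because $V(G)\setminus\{v\}$ is always a distance-$k$ resolving set: any vertex in the set has a $0$-entry in its own coordinate while $v$ does not, so all codes are distinct. With these in hand, parts (a) and (d) follow cleanly, while (b) and (c) carry the real weight. A key reformulation I would use throughout is that $V(G)\setminus\{u,v\}$ is distance-$k$ resolving exactly when $u,v$ are \emph{not} $d_k$-twins, since every pair meeting $V(G)\setminus\{u,v\}$ is automatically separated by a set-vertex sitting in its own coordinate.

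For (a), a single vertex $v$ resolves $G$ iff the values $d_k(x,v)$, $x\in V(G)$, are pairwise distinct. These lie in $\{0,1,\dots,k+1\}$, forcing $n\le k+2$, and distinctness forces exactly one vertex in each nonempty $\mathrm{BFS}$ layer about $v$. Thus $v$ has a unique neighbor, which has a unique further neighbor, and so on, with no chords possible; hence $G$ is a path with $v$ an endpoint. Conversely, an endpoint of $P_n$ with $2\le n\le k+2$ realizes the distinct values $0,1,\dots,n-1\le k+1$, so $\dim_k(G)=1\iff G\in\cup_{i=2}^{k+2}\{P_i\}$.

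For (d), the reformulation gives $\dim_k(G)\le n-2$ iff some $(n-2)$-set resolves iff some pair fails to be $d_k$-twins; so $\dim_k(G)=n-1$ iff \emph{every} pair of vertices is a pair of $d_k$-twins. If $G\ne K_n$, pick $u,v$ with $d(u,v)=2$ and a common neighbor $w$; since $k\ge 1$ we have $d_k(u,v)=2\ne 1=d_k(w,v)$, so $v\notin\{u,w\}$ separates $u$ and $w$, contradicting that all pairs are $d_k$-twins. Hence $G=K_n$, and the converse is immediate because every distance in $K_n$ equals $1$.

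For (b) and (c), the same reformulation reduces the claim to the lower bound $\dim_k(G)\ge n-2$, i.e.\ that no $(n-3)$-set resolves $G$. Since $d_k$-twinhood is an equivalence relation and any resolving set contains all but at most one vertex of each class, one gets $\dim_k(G)\ge n-p$ with $p$ the number of classes; but this is \emph{too weak}, and that weakness is exactly the main obstacle. For instance $P_4$ (with $k=1$) has four singleton twin-classes yet $\dim_1(P_4)=n-2=2$, while $P_4$ drops to $\dim_k(P_4)=1$ once $k\ge 2$ (as $P_4\in\cup_{i=2}^{k+2}\{P_i\}$), which is precisely why $P_4$ appears in (b) but not (c); likewise $K_s+(K_1\cup K_t)$ has three twin-classes yet still attains $n-2$. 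I would therefore argue directly at the level of triples: for $\dim_k(G)\ge n-2$ it is necessary and sufficient that in every triple $\{u,v,w\}$ some pair agrees in all coordinates outside the triple, and I would translate this constraint into forbidden adjacency patterns. The bulk of the work, and the expected difficulty, is this exhaustive but delicate case analysis showing that no connected graph other than $K_{s,t}$, $K_s+\overline{K}_t$, $K_s+(K_1\cup K_t)$ (together with $P_4$ when $k=1$) can satisfy it, together with the routine verification that each listed family does attain $n-2$.
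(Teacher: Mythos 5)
First, a point of reference: the paper does not prove this theorem at all --- it is recalled from the literature (part (a) from \cite{moreno1}, parts (b)--(d) from \cite{distKdim}, with the $k=1$ cases of (a) and (d) attributed to \cite{adim}), so there is no in-paper argument to compare yours against. Judged on its own terms, your proposal is correct and complete where it commits to details: the bounds $1\le\dim_k(G)\le n-1$, the BFS-layer argument for (a) (one vertex per layer, at most one vertex at distance $\ge k+1$, hence a path of order at most $k+2$ with the resolving vertex an endpoint), and the $d_k$-twin reformulation yielding (d) are all sound. The observation that $V(G)\setminus\{u,v\}$ resolves $G$ precisely when $u,v$ are separated by some vertex outside $\{u,v\}$ is the right tool, and your diagnosis of why the twin-class bound $\dim_k(G)\ge n-p$ is too weak for (b) and (c) --- with $P_4$ and $K_s+(K_1\cup K_t)$ as witnesses --- is accurate.

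The genuine gap is that parts (b) and (c) are not proved. You correctly reduce $\dim_k(G)\ge n-2$ to the condition that in every triple $\{u,v,w\}$ some pair agrees in all coordinates outside the triple, and you correctly note that each listed family must be checked to attain $n-2$; but the forward direction --- that this triple condition forces a connected graph to be one of $P_4$ (only when $k=1$), $K_{s,t}$, $K_s+\overline{K}_t$, or $K_s+(K_1\cup K_t)$ --- is the entire content of (b) and (c), and you explicitly defer it as ``an exhaustive but delicate case analysis'' without carrying it out. As written, the proposal establishes the easy half of the theorem together with a correct framework for the hard half, but the characterization itself is missing. Any completed version would also have to track the dependence on $k$ inside that analysis: a vertex $z$ separates $u$ from $v$ only when $\min\{d(u,z),k+1\}\ne\min\{d(v,z),k+1\}$, which is exactly why $P_4$ survives for $k=1$ but drops out for $k\ge2$; your sketch observes the phenomenon but the case analysis must be organized around it.
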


\begin{proposition}
For a connected graph $G$ and for $k\in\mathbb{Z}^+$, $\frac{\dim_k(G)}{\gamma_k(G)}$ can be arbitrarily large. 
\end{proposition}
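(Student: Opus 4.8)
The plan is to produce, for each fixed $k\in\mathbb{Z}^+$, a single family of graphs on which the ratio $\dim_k/\gamma_k$ is unbounded; the complete graphs $K_n$ serve this purpose for all $k$ simultaneously. The idea is that $K_n$ is maximally ``dominable'' (one vertex sees everything) yet maximally ``unresolvable'' (its vertices are mutually indistinguishable), so the two invariants sit at opposite extremes.

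First I would pin down the domination number. In $K_n$ every pair of distinct vertices is adjacent, so for any vertex $v$ and any $u\neq v$ we have $d(u,v)=1\le k$; hence $\{v\}$ is a distance-$k$ dominating set and $\gamma_k(K_n)=1$ for every $k\in\mathbb{Z}^+$. (This also follows from Observation~\ref{obs_dom}(a), since $N^k[v]=V(K_n)$.)

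Next I would appeal to Theorem~\ref{kdim_characterization}(d), which states that $\dim_k(G)=n-1$ precisely when $G=K_n$; in particular $\dim_k(K_n)=n-1$ for every $k\in\mathbb{Z}^+$. Combining the two computations gives $\frac{\dim_k(K_n)}{\gamma_k(K_n)}=n-1$, which tends to infinity as $n\to\infty$, establishing the claim (and, as a byproduct, that the difference $\dim_k(K_n)-\gamma_k(K_n)=n-2$ is likewise unbounded).

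There is no substantial obstacle here; the only points requiring care are that the chosen family works uniformly in $k$ — which holds because, for complete graphs, both invariants are independent of $k$ — and that Theorem~\ref{kdim_characterization}(d) supplies precisely the value of $\dim_k(K_n)$. Should a self-contained treatment of the dimension be preferred, the key observation is that any two distinct vertices $x,y$ of $K_n$ are twins, so for every $z\notin\{x,y\}$ one has $d_k(x,z)=d_k(y,z)=1$; hence every distance-$k$ resolving set must contain at least one of $x,y$, which forces all but at most one vertex into the set and yields $\dim_k(K_n)\ge n-1$, with equality since any $n-1$ vertices resolve $K_n$.
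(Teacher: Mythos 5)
Your proof is correct and follows essentially the same route as the paper, whose first example is exactly $K_n$ with $\gamma_k(K_n)=1$ by Observation~\ref{obs_dom}(a) and $\dim_k(K_n)=n-1$ by Theorem~\ref{kdim_characterization}(d). (The paper adds a second example, a subdivided star with $\gamma_k=1$ and $\dim_k=\alpha-1$, but this is supplementary and not needed for the claim.)
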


\begin{proof}
Let $G$ be a connected graph of order $n\ge4$. 
First, note that $\dim_k(K_n)=n-1$ by Theorem~\ref{kdim_characterization}(d) and $\gamma_k(K_n)=1$ by Observation~\ref{obs_dom}(a); thus $\frac{\dim_k(K_n)}{\gamma_k(K_n)}=n-1 \rightarrow \infty$ as $n\rightarrow \infty$. 

For another example, let $G$ be the graph obtained from $K_{1, \alpha}$, where $\alpha\ge 3$, by subdividing each edge of $K_{1, \alpha}$ exactly $k-1$ times; let $v$ be the central vertex of degree $\alpha$ in $G$ and let $\ell_1, \ell_2, \ldots, \ell_{\alpha}$ be the leaves of $G$ such that $d(v, \ell_i)=k$ for each $i\in[\alpha]$. Let $N(v)=\{s_1, s_2, \ldots, s_{\alpha}\}$ such that $s_i$ lies on the $v-\ell_i$ path in $G$, and let $P^i$ denote the $s_i-\ell_i$ path, where $i\in[\alpha]$. Then $\gamma_k(G)=1$ since $\{v\}$ is a minimum distance-$k$ dominating set of $G$ by Observation~\ref{obs_dom}(a). Note that $\dim_k(G)=\alpha-1$: (i) $\dim_k(G)\le \alpha-1$ since $N(v)-\{s_1\}$ forms a distance-$k$ resolving set of $G$; (ii) $\dim_k(G)\ge \alpha-1$ since $S \cap (V(P^i)\cup V(P^j))\neq\emptyset$ for any distance-$k$ resolving set $S$ of $G$ and for distinct $i,j\in[\alpha]$, as $S \cap (V(P^i)\cup V(P^j))=\emptyset$ implies $\code_{S,k}(s_i)=\code_{S,k}(s_j)$. So, $\frac{\dim_k(G)}{\gamma_k(G)}=\alpha-1 \rightarrow \infty$ as $\alpha\rightarrow \infty$.~\hfill
\end{proof}

Next, for any connected graph $G$ of order $n\ge2$ and for any $k\in\mathbb{Z}^+$, we show that $2 \le\gamma_k(G)+\dim_k(G)\le n$ and we characterize $G$ with $\gamma_k(G)+\dim_k(G)=2$. We recall the following results.

\begin{lemma}\emph{\cite{dimdom}}\label{no_twins}
Let $G$ be a connected graph. Then there exists a minimum dominating set for $G$ which does not have any pair of twin vertices.
\end{lemma}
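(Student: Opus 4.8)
The plan is to run a minimality/potential-function argument. Among all minimum dominating sets of $G$, I would fix one, call it $D$, that minimizes the number $t(D)$ of unordered pairs of twin vertices contained in $D$, and then show $t(D)=0$. Suppose for contradiction that $D$ contains a twin pair $\{x,y\}$.

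First I would dispose of true twins. If $N[x]=N[y]$, then $x$ and $y$ are adjacent and dominate exactly the same closed neighborhood, so $D\setminus\{y\}$ is still a dominating set, contradicting the minimality of $|D|$. Hence every twin pair inside a minimum dominating set must consist of false twins, i.e.\ $N(x)=N(y)$ with $x$ and $y$ non-adjacent. Working with such a pair, the next step is to extract structure: I claim $N(y)\cap D=\emptyset$. Indeed, if $y$ had a neighbor in $D$, then any vertex whose only dominator in $D$ is $y$ would lie in $N(y)=N(x)$ and so be dominated by $x$ as well, whence $D\setminus\{y\}$ would still dominate $G$ — again contradicting minimality. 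Letting $T=\{z:N(z)=N(y)\}$ be the false-twin class of $y$ (which contains $x$), the equality $N(z)=N(y)$ gives $N(z)\cap D=\emptyset$ for every $z\in T$; since each such $z$ must be dominated, it must lie in $D$, so $T\subseteq D$ and $m:=|T|\ge2$.

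Now I would perform a single swap: fix any neighbor $w\in N(y)$ (which exists since $G$ is nontrivial and connected, and $w\notin D$ because $N(y)\cap D=\emptyset$) and set $D'=(D\setminus\{y\})\cup\{w\}$. Checking that $D'$ is again a minimum dominating set should be routine: it has the same cardinality, $w$ dominates $y$, and any vertex formerly dominated only by $y$ lies in $N(y)=N(x)$ and is still dominated by $x$.

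The crux — and the step I expect to be the only real obstacle — is to show that the swap strictly decreases the twin count. The twin pairs destroyed are exactly the $m-1\ge1$ pairs $\{y,z\}$ with $z\in T\setminus\{y\}$ (there are no true twins of $y$ in $D$, as $y$ has no neighbor in $D$). For the pairs created, I would prove that $w$ has no twin at all in $D\setminus\{y\}$: any twin $a\in D$ of $w$, whether true or false, would satisfy $T\subseteq N(w)\subseteq N[a]$, so (as $|T|\ge2$ and at most one vertex of $T$ equals $a$) some $z\in T$ with $z\neq a$ would be adjacent to $a$; but then $a\in N(z)\cap D$, contradicting $N(z)\cap D=\emptyset$. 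Hence no new twin pair is created, giving $t(D')=t(D)-(m-1)<t(D)$ and contradicting the choice of $D$. Therefore $t(D)=0$, i.e.\ $D$ has no pair of twin vertices, as desired.
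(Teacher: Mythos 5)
The paper does not actually prove this lemma --- it is quoted from the reference \cite{dimdom} and used as a black box in the proof of Proposition~\ref{cor_dimdom} --- so there is no in-paper argument to compare against; your proposal has to stand on its own, and it does. The exchange argument is correct and complete: the reduction to false twins via deleting one of two true twins, the deduction that the whole false-twin class $T$ of $y$ satisfies $N(z)\cap D=\emptyset$ and hence $T\subseteq D$, and the swap $D'=(D\setminus\{y\})\cup\{w\}$ with $w\in N(y)$ all check out, and you correctly identify and close the one nontrivial gap, namely that $w$ acquires no twin in $D\setminus\{y\}$ (since any such twin $a$ would be adjacent to some $z\in T\setminus\{a\}$, putting $a\in N(z)\cap D=\emptyset$). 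This yields $t(D')<t(D)$ and the desired contradiction, so your write-up in fact supplies a self-contained proof of a statement the paper only cites.
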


\begin{theorem}\emph{\cite{dimdom}}\label{dimdom_characterization}
Let $G$ be a connected graph of order $n\ge 2$. Then $\gamma(G)+\dim(G) \le n$, and equality holds if and only if $G\in\{K_n, K_{s, n-s}\}$ for $2\le s \le n-2$.
\end{theorem}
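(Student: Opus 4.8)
The plan is to establish the bound and the equality characterization separately, with Lemma~\ref{no_twins} doing the heavy lifting for the bound.

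For $\gamma(G)+\dim(G)\le n$, I would fix a minimum dominating set $D$ having no pair of twin vertices (Lemma~\ref{no_twins}) and prove that its complement $W=V(G)-D$ is a resolving set; since $|W|=n-\gamma(G)$, this gives $\dim(G)\le n-\gamma(G)$. The key simplification is that any pair $\{a,b\}$ with $a\in W$ is automatically resolved by $W$, because $d(a,a)=0\ne d(a,b)$, so only pairs $\{x,y\}\subseteq D$ need checking. Suppose such a pair were unresolved by $W$, i.e. $d(v,x)=d(v,y)$ for all $v\in W$. Restricting to neighbors yields $N(x)\cap W=N(y)\cap W$. I would then invoke minimality: every vertex of a minimum (hence minimal) dominating set has a private neighbor. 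If the private neighbor of $x$ lay in $W$, it would be adjacent to $x$ but to no other vertex of $D$, in particular not to $y$, contradicting $N(x)\cap W=N(y)\cap W$; hence $x$, and likewise $y$, has no neighbor in $D$. But then $N(x)=N(x)\cap W=N(y)\cap W=N(y)$, so $x,y$ are twins, contradicting the choice of $D$. This closes the bound.

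For equality, the easy direction is a direct check: $\gamma(K_n)+\dim(K_n)=1+(n-1)=n$, and for $2\le s\le n-2$, $\gamma(K_{s,n-s})+\dim(K_{s,n-s})=2+(n-2)=n$. For the forward direction (assuming $n\ge 4$; the cases $n\le 3$ are immediate) I would case on $\dim(G)$, noting equality forces $\dim(G)=n-\gamma(G)$. If $\dim(G)=n-1$, the classical result $\dim(G)=n-1\iff G=K_n$ gives $G=K_n$, and $\gamma(K_n)=1$ confirms equality. If $\dim(G)=n-2$, the classical list of graphs with $\dim(G)=n-2$ is $K_{s,t}$, $K_s+\overline{K}_t$, and $K_s+(K_1\cup K_t)$; among these exactly the $K_{s,n-s}$ with $2\le s\le n-2$ have $\gamma(G)=2$, while every other member has a universal vertex and thus $\gamma=1$ (giving sum $n-1$). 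This matches the claimed family.

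The main obstacle is the remaining range $\dim(G)\le n-3$, i.e. $\gamma(G)\ge 3$, where I must upgrade the bound to a strict inequality. The plan is to exploit that under equality $W=V(G)-D$ is not merely resolving but a \emph{minimum} resolving set, hence minimal, so every $w\in W$ is the unique $W$-resolver of some pair; by the reduction above that pair lies in $D\cup\{w\}$. I would then argue that a twin-free minimum dominating set of size $\ge 3$ creates enough redundancy — via the same private-neighbor analysis, together with the fact that a large resolving set forces few twin classes and hence a dense, easily dominated graph — to exhibit a non-essential vertex of $W$, contradicting minimality. Making this redundancy argument precise, ruling out every configuration with $\gamma(G)\ge 3$, is where the real work lies.
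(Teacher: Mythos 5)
Your proof of the inequality $\gamma(G)+\dim(G)\le n$ is correct and is essentially the argument the paper itself records (inside the proof of Proposition~\ref{cor_dimdom}): take a twin-free minimum dominating set $D$ via Lemma~\ref{no_twins} and show that $W=V(G)-D$ resolves every pair of vertices of $D$. Your private-neighbor phrasing is a repackaging of the paper's step ``if $y$ had a neighbor in $D$ then $D-\{y\}$ would still dominate'': a vertex of a minimal dominating set that has a neighbor inside $D$ must have an external private neighbor, and both versions land on $N(x)=N(y)$ and the twin contradiction. Note that this inequality is the only portion of Theorem~\ref{dimdom_characterization} that the paper actually proves; the full statement, including the equality characterization, is imported from \cite{dimdom}.

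The equality characterization is where your proposal has a genuine gap. The backward direction and the cases $\dim(G)=n-1$ and $\dim(G)=n-2$ are fine: the classification of graphs with $\dim(G)=n-2$, together with the observation that every member of that list other than $K_{s,t}$ with $s,t\ge2$ has a universal vertex and hence domination number $1$, closes those cases. But the case $\gamma(G)\ge3$ (equivalently $\dim(G)\le n-3$ under equality) is the substantive half of the forward direction, and what you offer there is a plan, not a proof. The plan is also shaky on its own terms: minimality of the minimum resolving set $W$ says every $w\in W$ is the unique $W$-resolver of some pair, which is precisely the statement you would need to \emph{contradict}, and you identify no mechanism that actually produces a removable vertex of $W$ when $|D|\ge3$. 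The heuristic ``a large resolving set forces few twin classes and hence a dense, easily dominated graph'' runs backwards --- many twins force a large resolving set, not conversely --- and a twin-free minimum dominating set of size $3$ carries no evident redundancy. Until you supply an argument that equality is impossible for $\gamma(G)\ge 3$ (for example, by showing equality forces $\diam(G)\le2$ and then $\gamma(G)\le2$, or by a direct analysis of how three dominating vertices with pairwise distinct $W$-neighborhoods could sit in $G$), the forward direction is established only for $\gamma(G)\le2$ and the characterization is unproved.
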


\begin{proposition}\label{cor_dimdom}
Let $G$ be any connected graph of order $n\ge2$, and let $k\in\mathbb{Z}^+$. Then $2 \le \gamma_k(G)+\dim_k(G)\le n$, and $\gamma_k(G)+\dim_k(G)=2$ if and only $G\in\cup_{i=2}^{k+2}\{P_i\}$.
\end{proposition}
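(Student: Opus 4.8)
The plan is to prove the two inequalities and then the characterization of equality in the lower bound. For the \emph{lower bound} $\gamma_k(G)+\dim_k(G)\ge 2$, I would simply observe that since $n\ge 2$, neither $\gamma_k(G)$ nor $\dim_k(G)$ can be zero: a single vertex cannot distance-$k$ resolve all of $V(G)$ when $|V(G)|\ge 2$ unless there is a vertex at each possible distance, and in any case $\dim_k(G)\ge 1$ and $\gamma_k(G)\ge 1$ always hold for a graph with at least one vertex, forcing the sum to be at least $2$. For the \emph{upper bound} $\gamma_k(G)+\dim_k(G)\le n$, the cleanest route is to deduce it from the known case $k=1$. By Theorem~\ref{dimdom_characterization} we have $\gamma(G)+\dim(G)\le n$, i.e. $\gamma_1(G)+\dim_1(G)\le n$. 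By Observation~\ref{obs_mixed}(a),(b), both $\gamma_k(G)\le\gamma_1(G)$ and $\dim_k(G)\le\dim_1(G)$ for every $k\in\mathbb{Z}^+$, so summing gives $\gamma_k(G)+\dim_k(G)\le\gamma_1(G)+\dim_1(G)\le n$. This handles both bounds with almost no computation.

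For the \emph{characterization}, I want to show $\gamma_k(G)+\dim_k(G)=2$ if and only if $G\in\cup_{i=2}^{k+2}\{P_i\}$. Since the sum is at least $2$ and each summand is at least $1$, equality holds exactly when $\gamma_k(G)=1$ \emph{and} $\dim_k(G)=1$ simultaneously. The condition $\dim_k(G)=1$ is already settled: by Theorem~\ref{kdim_characterization}(a), $\dim_k(G)=1$ if and only if $G\in\cup_{i=2}^{k+2}\{P_i\}$. So the characterization reduces to checking that for every such path the domination condition $\gamma_k(G)=1$ is automatically satisfied, which then makes the reverse direction immediate and the forward direction a consequence of the $\dim_k$ classification.

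The remaining verification is that $\gamma_k(P_i)=1$ for each $i$ with $2\le i\le k+2$. For such a path of order $i\le k+2$, an endpoint $v$ has the property that every other vertex lies within distance $i-1\le k+1$ of it; more precisely the vertex at distance $i-1$ from an endpoint is within distance $k$ provided $i-1\le k$, i.e. $i\le k+1$, and for $i=k+2$ one instead picks an interior vertex (for instance a vertex whose eccentricity is at most $k$) so that $N^k[v]=V(P_i)$. By Observation~\ref{obs_dom}(a), the existence of a vertex $v$ with $N^k[v]=V(G)$ gives $\gamma_k(G)=1$. A path $P_i$ with $i\le k+2$ has radius $\lfloor (i-1)/2\rfloor\le\lfloor (k+1)/2\rfloor\le k$, so a central vertex realizes $N^k[v]=V(P_i)$ and hence $\gamma_k(P_i)=1$. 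Combining, for $G\in\cup_{i=2}^{k+2}\{P_i\}$ we get both $\gamma_k(G)=1$ and $\dim_k(G)=1$, so the sum is $2$; conversely, $\gamma_k(G)+\dim_k(G)=2$ forces $\dim_k(G)=1$, which by Theorem~\ref{kdim_characterization}(a) places $G$ in $\cup_{i=2}^{k+2}\{P_i\}$.

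I do not anticipate a serious obstacle here, since both hard directions are already packaged into the cited results; the only point requiring care is the radius bookkeeping for $P_i$ to confirm $\gamma_k=1$ on the full range $2\le i\le k+2$, and the subtlety that $\dim_k(G)=1$ alone already pins down $G$ so that the domination value comes along for free rather than needing an independent two-sided classification.
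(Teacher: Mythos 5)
Your lower bound and the characterization of equality are fine and essentially identical to the paper's argument (each summand is at least $1$; equality forces $\gamma_k(G)=1=\dim_k(G)$; Theorem~\ref{kdim_characterization}(a) pins down $G$ as a path $P_i$ with $i\le k+2$, and the radius computation confirms $\gamma_k(P_i)=1$). The reduction of the upper bound to the case $k=1$ via Observation~\ref{obs_mixed}(a),(b) is also the paper's route.

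However, there is a genuine gap in how you establish $\gamma_1(G)+\dim_1(G)\le n$. You write that Theorem~\ref{dimdom_characterization} gives ``$\gamma(G)+\dim(G)\le n$, i.e.\ $\gamma_1(G)+\dim_1(G)\le n$,'' but $\dim(G)$ there is the ordinary metric dimension, not the adjacency dimension $\dim_1(G)$. By Observation~\ref{obs_mixed}(b) we have $\dim(G)\le\dim_1(G)$, with the inequality often strict and the gap unbounded (e.g.\ $\dim(P_n)=1$ while $\dim_1(P_n)$ grows with $n$), so the stated inequality $\gamma(G)+\dim(G)\le n$ is \emph{weaker} than what you need and does not imply $\gamma_1(G)+\dim_1(G)\le n$. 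This is exactly why the paper does not merely cite the theorem: it observes that the \emph{proof} in \cite{dimdom} yields the stronger statement, and reproduces the argument --- take a minimum dominating set $D$ containing no pair of twin vertices (Lemma~\ref{no_twins}), show that no two vertices of $D$ have the same neighborhood in $S=V(G)-D$ (else one of them could be deleted from $D$ or they would be twins in $D$), conclude that $S$ is a distance-$1$ resolving set, and hence $\gamma_1(G)+\dim_1(G)\le|D|+|S|=n$. You need to supply this (or an equivalent) argument; as written, the upper bound is not proved.
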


\begin{proof}
Let $G$ be a connected graph of order $n\ge2$, and let $k\in\mathbb{Z}^+$. Since $\gamma_k(G)\ge 1$ and $\dim_k(G)\ge 1$, we have $\gamma_k(G)+\dim_k(G)\ge2$. Note that $\gamma_k(G)+\dim_k(G)=2$ if and only if $\gamma_k(G)=1=\dim_k(G)$ if and only if  $G\in \cup_{i=2}^{k+2}\{P_i\}$ by Observation~\ref{obs_dom}(a) and Theorem~\ref{kdim_characterization}(a). 

To prove $\gamma_k(G)+\dim_k(G)\le n$, it suffices to show that $\gamma_1(G)+\dim_1(G)\le n$ by Observation~\ref{obs_mixed}. The proof given for Theorem~\ref{dimdom_characterization} in~\cite{dimdom} actually shows $\gamma_1(G)+\dim_1(G)\le n$. To see this, we can take a minimum dominating set $D$ of $G$ that contains no twin vertices by Lemma~\ref{no_twins}. Suppose $x,y\in D$ have the same neighbors in $V(G)-D$; this implies that neither $x$ nor $y$ has a neighbor in $D$, because if, say, $y$ has a neighbor in $D$, then $D-\{y\}$ remains a dominating set, and thus $x$ and $y$ have the same neighbors in $V(G)$, contradicting the choice of $D$. Since no two vertices of $D$ have the same neighborhood in $S=V(G)-D$, $S$ is a distance-$1$ resolving set of $G$, and we have $\gamma_1(G)+\dim_1(G)\le |D|+|S|=n$.~\hfill
\end{proof}

In contrast to Theorem~\ref{dimdom_characterization}, we note that if $G\in\{P_4, K_n, K_{s, n-s}\}$ with $2\le s \le n-2$, then $\gamma_1(G)+\dim_1(G)=|V(G)|$. So, we have the following

\begin{question}
Can we characterize graphs $G$ satisfying $\gamma_k(G)+\dim_k(G)=|V(G)|$? 
\end{question}


\section{Bounds on $\gamma_L^k(G)$}\label{sec_distK_loc_dom}

In this section, for any connected graph $G$ of order $n\ge2$ and for any $k\in\mathbb{Z}^+$, we show that $1\le \max\{\gamma_k(G), \dim_k(G)\} \le \gamma_L^k(G) \le\min\{\dim_k(G)+1, n-1\}$; we characterize $G$ satisfying $\gamma_L^k(G)=1$ and $\gamma_L^k(G)=n-1$, respectively. For any non-trivial tree $T$, we show that $\gamma_L^k(T) \le |V(T)|-ex(T)$ and we characterize trees $T$ achieving equality.

\begin{theorem}\label{b_main}
For any connected graph $G$ of order $n\ge2$ and for any $k\in\mathbb{Z}^+$, 
$$\max\{\gamma_k(G), \dim_k(G)\} \le \gamma_L^k(G) \le \min\{1+\dim_k(G), n-1\}.$$
\end{theorem}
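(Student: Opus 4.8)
The plan is to handle the two inequalities separately, proving the lower bound directly from the definition and then splitting the upper bound into its two constituent estimates, whose minimum gives the claim.

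For the lower bound $\max\{\gamma_k(G),\dim_k(G)\}\le\gamma_L^k(G)$, I would argue straight from the definition of $\gamma_L^k(G)$. Let $S$ be any minimum distance-$k$ locating-dominating set, so $|S|=\gamma_L^k(G)$. By definition $S$ is simultaneously a distance-$k$ dominating set and a distance-$k$ resolving set of $G$, whence $|S|\ge\gamma_k(G)$ and $|S|\ge\dim_k(G)$; therefore $\gamma_L^k(G)=|S|\ge\max\{\gamma_k(G),\dim_k(G)\}$. No further work is required here.

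For the upper bound I would establish $\gamma_L^k(G)\le 1+\dim_k(G)$ and $\gamma_L^k(G)\le n-1$ independently. To obtain the first, start from a minimum distance-$k$ resolving set $S$ and reuse the dichotomy from the proof of Proposition~\ref{dom_upper}: either $d(u,S)\le k$ for every $u\in V(G)$, in which case $S$ is already a distance-$k$ dominating set and hence a distance-$k$ locating-dominating set of size $\dim_k(G)$; or there is a (unique) vertex $w$ with $d(w,S)>k$, and $S\cup\{w\}$ is a distance-$k$ dominating set. The key point I would stress is that $S\cup\{w\}$ remains a distance-$k$ resolving set, since any pair of distinct vertices already distinguished by some $z\in S$ is distinguished by that same $z\in S\cup\{w\}$; thus $S\cup\{w\}$ is a distance-$k$ locating-dominating set of size $\dim_k(G)+1$, giving $\gamma_L^k(G)\le \dim_k(G)+1$ in both cases. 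To obtain the second, I would exhibit an explicit set: fix any $v\in V(G)$ and put $S=V(G)-\{v\}$. As $G$ is connected with $n\ge2$, the vertex $v$ has a neighbor $w\in S$ with $d(v,w)=1\le k$, so $S$ is distance-$k$ dominating; and $S$ is distance-$k$ resolving because every $x\in S$ is distinguished from every other vertex by $z=x$ (as $d_k(x,x)=0$ while $d_k(y,x)\ge1$ for $y\neq x$), while $V(G)-S=\{v\}$ contains no pair of distinct vertices to resolve. Hence $S$ is a distance-$k$ locating-dominating set with $|S|=n-1$.

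Combining the two estimates yields $\gamma_L^k(G)\le\min\{1+\dim_k(G),\,n-1\}$, which together with the lower bound proves the theorem. I do not expect a genuine obstacle; the only step demanding care is the observation that enlarging a distance-$k$ resolving set preserves resolvability, as this is precisely what allows the domination-repair vertex $w$ from Proposition~\ref{dom_upper} to be adjoined without spoiling the resolving property.
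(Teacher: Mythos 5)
Your proposal is correct and follows essentially the same route as the paper: the lower bound is read off from the definition, and the upper bound comes from augmenting a minimum distance-$k$ resolving set by the at most one vertex with code $({\bf k+1})_{|S|}$, together with the observation that any $(n-1)$-subset of $V(G)$ is a distance-$k$ locating-dominating set. Your extra remarks (that enlarging a resolving set preserves resolvability, and the explicit check that $V(G)-\{v\}$ dominates via connectivity) merely spell out details the paper leaves implicit.
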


\begin{proof}
Let $G$ be a connected graph of order $n\ge2$, and let $k\in\mathbb{Z}^+$. Since a minimum distance-$k$ locating-dominating set of $G$ is both a distance-$k$ dominating set of $G$ and a distance-$k$ resolving set of $G$, we have $\gamma_L^k(G)\ge \max\{\gamma_k(G), \dim_k(G)\}$. 

Next, we show that $\gamma_L^k(G)\le \min\{1+\dim_k(G), n-1\}$. Suppose $S$ is a minimum distance-$k$ resolving set of $G$; then at most one vertex in $G$ has the distance-$k$ metric code $(\textbf{k+1})_{|S|}$ with respect to $S$. If $\code_{S,k}(u)\neq (\textbf{k+1})_{|S|}$ for each $u\in V(G)$, then $S$ is a distance-$k$ locating-dominating set of $G$. If $\code_{S,k}(w)=(\textbf{k+1})_{|S|}$ for some $w\in V(G)$, then $S\cup\{w\}$ forms a distance-$k$ locating-dominating set of $G$. So, $\gamma_L^k(G)\le |S|+1=\dim_k(G)+1$. Now, $\gamma_L^k(G)\le n-1$ follows from the fact that any vertex subset $S' \subseteq V(G)$ with $|S'|=n-1$ is a distance-$k$ locating-dominating set of $G$.~\hfill  
\end{proof}

Theorems~\ref{kdim_characterization}(d) and~\ref{b_main} imply that, for $n\ge 2$ and for $k\ge1$, $\max\{\gamma_k(K_n), \dim_k(K_n)\}=\gamma_L^k(K_n)=\min\{1+\dim_k(K_n), n-1\}$. Since $\gamma_k(G)\ge 1$ and $\dim_k(G)\ge 1$, Theorem~\ref{b_main} implies the following.

\begin{corollary}
For any connected graph $G$ of order $n\ge2$ and for any $k\in\mathbb{Z}^+$, $1\le \gamma_L^k(G)\le n-1$. 
\end{corollary}

Next, we characterize connected graphs $G$ of order $n$ satisfying $\gamma_L^k(G)=1$ and $\gamma_L^k(G)=n-1$, respectively, for all $k\in\mathbb{Z}^+$. We recall the following observation.

\begin{observation}\emph{\cite{distKdim}}\label{obs_kdim}
Let $x$ and $y$ be distinct twin vertices of $G$, and let $k\in\mathbb{Z}^+$. Then, for any distance-$k$ resolving set $S_k$ of $G$, $S_k\cap\{x,y\}\neq\emptyset$.
\end{observation}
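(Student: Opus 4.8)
The plan is to prove the statement by contraposition at the level of codes: I will show that twin vertices look identical from every vertex other than themselves, so any resolving set is forced to contain one of them. The technical core is the following distance fact, which I would isolate first.

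\medskip

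\noindent\textbf{Key claim.} If $x$ and $y$ are twin vertices of $G$, then $d(x,z)=d(y,z)$ for every $z\in V(G)-\{x,y\}$. Since $d_k(\cdot,\cdot)=\min\{d(\cdot,\cdot),k+1\}$, this immediately upgrades to $d_k(x,z)=d_k(y,z)$ for all such $z$, which is exactly what I need for the codes.

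\medskip

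To prove the claim, I would fix $z\neq x,y$, take a shortest $x$--$z$ path $P$, and let $w$ be its second vertex, a neighbor of $x$. If $w\neq y$, then $w\in N(x)-\{y\}=N(y)-\{x\}$, so $w$ is also adjacent to $y$; replacing the initial edge $xw$ of $P$ by $yw$ yields a $y$--$z$ walk of length $d(x,z)$, whence $d(y,z)\le d(x,z)$. The delicate case is $w=y$, which can occur only when $x,y$ are adjacent (true twins) and a priori forces $d(x,z)=1+d(y,z)$. I would rule this out: the portion of $P$ from $y$ to $z$ is then a shortest $y$--$z$ path whose second vertex $w'$ cannot equal $x$ (vertices of $P$ are distinct), so $w'\in N(y)-\{x\}=N(x)-\{y\}$ is adjacent to $x$; this produces an $x$--$z$ walk of length $d(y,z)=d(x,z)-1<d(x,z)$, contradicting the minimality of $d(x,z)$. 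Hence $w=y$ is impossible, and in every case $d(y,z)\le d(x,z)$. By the symmetric argument (interchanging $x$ and $y$) we get $d(x,z)\le d(y,z)$, so equality holds and the claim is proved.

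\medskip

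Finally I would assemble the argument. Suppose, for contradiction, that $S_k$ is a distance-$k$ resolving set of $G$ with $S_k\cap\{x,y\}=\emptyset$. Then every $z\in S_k$ satisfies $z\neq x,y$, so by the claim $d_k(x,z)=d_k(y,z)$; consequently $\code_{S_k,k}(x)=\code_{S_k,k}(y)$, so $S_k$ fails to resolve the distinct pair $x,y$, contradicting that $S_k$ is a distance-$k$ resolving set. Therefore $S_k\cap\{x,y\}\neq\emptyset$, as desired. I expect the main obstacle to be precisely the true-twin subcase of the key claim, where one must carefully exclude the possibility that a shortest path from one twin leaves immediately through the other; everything else is a short neighbor-swapping argument followed by an appeal to the definition of a resolving set.
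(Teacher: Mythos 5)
Your proof is correct and complete: the key claim that twins satisfy $d(x,z)=d(y,z)$ for all $z\in V(G)-\{x,y\}$ is exactly the right reduction, and your handling of the true-twin subcase (a shortest path from $x$ that begins with the edge $xy$ would shortcut through the second vertex of its $y$--$z$ tail, contradicting minimality) is sound. The paper itself gives no proof of this observation --- it is imported from \cite{distKdim} --- and your argument is the standard one that source uses, so there is nothing to flag.
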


\begin{theorem}\label{loc_dom_characterization}
Let $G$ be a connected graph of order $n\ge 2$, and let $k\in\mathbb{Z}^+$. Then
\begin{itemize}
\item[(a)] $\gamma_L^k(G)=1$ if and only if $G\in\cup_{i=2}^{k+1}\{P_i\}$;
\item[(b)] $\gamma_L^1(G)=n-1$ if and only if $G\in\{K_n, K_{1, n-1}\}$;
\item[(c)] for $k\ge 2$, $\gamma_L^k(G)=n-1$ if and only if $G=K_n$.
\end{itemize}
\end{theorem}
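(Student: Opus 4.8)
The plan is to prove each of the three characterizations by combining the general bound from Theorem~\ref{b_main} with the structural characterizations of $\dim_k$ recalled in Theorem~\ref{kdim_characterization} and the twin-vertex constraint of Observation~\ref{obs_kdim}. I would treat the three parts in the order (a), (c), (b), handling the two ``$n-1$'' cases together since they share the same engine.

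For part (a), the key observation is that $\gamma_L^k(G)=1$ forces both $\gamma_k(G)=1$ and $\dim_k(G)=1$, since $\max\{\gamma_k(G),\dim_k(G)\}\le\gamma_L^k(G)$ by Theorem~\ref{b_main}. By Theorem~\ref{kdim_characterization}(a), $\dim_k(G)=1$ already pins down $G\in\cup_{i=2}^{k+2}\{P_i\}$, so I only need to decide which of these paths admit a \emph{single} vertex that simultaneously resolves and dominates at distance $k$. The endpoint of $P_i$ is always a distance-$k$ resolving set of size one for $2\le i\le k+2$; the question is whether a single vertex can also distance-$k$ dominate. In $P_{k+2}$, the endpoint leaves the opposite endpoint at distance $k+1>k$, so no single vertex dominates (the two ends of $P_{k+2}$ have pairwise distance $k+1$, and a dominating singleton would need every vertex within distance $k$, which the diameter-$(k+1)$ path forbids). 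For $2\le i\le k+1$ the diameter is at most $k$, so a suitable endpoint is within distance $k$ of everything; I would verify this single vertex both dominates and resolves. Thus the characterization sharpens the path range from $k+2$ down to $k+1$ exactly because of the domination requirement.

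For the $n-1$ cases, I would start from the upper bound $\gamma_L^k(G)\le\min\{1+\dim_k(G),\,n-1\}$ and the lower bound $\gamma_L^k(G)\ge\dim_k(G)$. If $\dim_k(G)=n-1$, then Theorem~\ref{kdim_characterization}(d) gives $G=K_n$, and the sandwich forces $\gamma_L^k(G)=n-1$; conversely $\gamma_L^k(K_n)=n-1$ is immediate since all vertices are mutually twins, so by Observation~\ref{obs_kdim} any distance-$k$ resolving (hence locating-dominating) set misses at most one vertex. This disposes of the ``$G=K_n$'' direction in both (b) and (c). The remaining work is to determine when $\dim_k(G)=n-2$ can still yield $\gamma_L^k(G)=n-1$, i.e.\ when the ``$+1$'' in the upper bound is actually forced. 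Here I would invoke Theorem~\ref{kdim_characterization}(b),(c): for $\dim_k(G)=n-2$ the candidate graphs are $P_4$ (only when $k=1$), $K_{s,t}$, $K_s+\overline{K}_t$, and $K_s+(K_1\cup K_t)$. For each such $G$ I would check whether a minimum distance-$k$ resolving set $S$ of size $n-2$ is already locating-dominating or whether the lone undominated/unresolved vertex forces an extra element, using the twin structure to decide which vertices of $V(G)-S$ carry the all-$(k+1)$ code.

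The main obstacle will be the careful case analysis of the $\dim_k(G)=n-2$ families in part (b), where I must confirm that exactly $K_{1,n-1}$ (among them) pushes $\gamma_L^1$ up to $n-1$ while the others stay at $n-2$. The clean way to see this is that in $K_{1,n-1}$ the $n-1$ leaves are mutually twins, so Observation~\ref{obs_kdim} demands at least $n-2$ of them in any resolving set, and then the center together with the remaining leaf has identical distance-$1$ interaction unless one more vertex is added; by contrast, in $K_{s,t}$ with $s,t\ge 2$, or the join constructions, one can exhibit an $(n-2)$-set that is simultaneously dominating and resolving, so $\gamma_L^1(G)=n-2<n-1$. For part (c) with $k\ge2$, the graphs $K_{s,t}$, $K_s+\overline K_t$, and $K_s+(K_1\cup K_t)$ all have diameter at most $2\le k$, so a small dominating set exists and $\gamma_L^k(G)<n-1$, leaving only $K_n$; the case $P_4$ is excluded for $k\ge2$ already at the level of Theorem~\ref{kdim_characterization}(c). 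Once these family-by-family checks are organized around the twin/domination dichotomy, the statements follow by matching the lower and upper bounds of Theorem~\ref{b_main}.
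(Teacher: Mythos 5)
Your proposal follows essentially the same route as the paper: part (a) via $\dim_k(G)=1\Rightarrow G\in\cup_{i=2}^{k+2}\{P_i\}$ plus the diameter obstruction for $P_{k+2}$, and parts (b),(c) via the sandwich $\dim_k(G)\le\gamma_L^k(G)\le\dim_k(G)+1$ together with the classification of graphs having $\dim_k(G)\in\{n-2,n-1\}$ and a family-by-family check. One correction to a specific step: in $K_{1,n-1}$, a minimum distance-$1$ resolving set $S$ consists (by Observation~\ref{obs_kdim}) of $n-2$ of the leaves, and the reason it fails to be locating-dominating is \emph{not} that the center and the omitted leaf have identical $d_1$-interaction with $S$ --- they are in fact distinguished, since the center is at distance $1$ and the omitted leaf at distance $2$ from every vertex of $S$ --- but that the omitted leaf is undominated, being at distance $2>1$ from $S$. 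Your own framework (checking for an undominated \emph{or} unresolved vertex forcing the ``$+1$'') absorbs this slip, and the deferred family-by-family verifications for the $\dim_1(G)=n-2$ graphs are exactly the computations the paper carries out explicitly.
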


\begin{proof}
Let $G$ be a connected graph of order $n\ge 2$, and let $k\in\mathbb{Z}^+$. 

(a) If $G\in \cup_{i=2}^{k+1}\{P_i\}$, then a leaf of $G$ forms a distance-$k$ locating-dominating set of $G$; thus, $\gamma_L^k(G)=1$. Now, suppose $\gamma_L^k(G)=1$; then $\gamma_k(G)=1=\dim_k(G)$. By Theorem~\ref{kdim_characterization}(a), $\dim_k(G)=1$ implies $G\in \cup_{i=2}^{k+2}\{P_i\}$, where any minimum distance-$k$ resolving set consists of a leaf whereas a leaf of $P_{k+2}$ fails to form a distance-$k$ dominating set of $P_{k+2}$ since $\diam(P_{k+2})=k+1$. So, $\gamma_L^k(G)=1$ implies $G\in \cup_{i=2}^{k+1}\{P_i\}$.

(b) First, suppose $G\in\{K_n, K_{1, n-1}\}$. Note that $\gamma_L^1(K_n)=n-1$ by Theorems~\ref{kdim_characterization}(d) and~\ref{b_main}. For $n \ge 3$, if $v$ is the central vertex of $K_{1, n-1}$ and $N(v)=\{s_1, s_2, \ldots, s_{n-1}\}$, then $\dim_1(K_{1, n-1})=n-2$ by Theorem~\ref{kdim_characterization}(b) and $|S\cap N(v)|=n-2$ for any minimum distance-$1$ resolving set $S$ of $K_{1, n-1}$ by Observation~\ref{obs_kdim}; without loss of generality, let $S'=\{s_1, s_2, \ldots, s_{n-2}\}$ be a minimum distance-$1$ resolving set of $K_{1, n-1}$. Since $d(s_{n-1}, S')=2$, $S'$ fails to be a distance-$1$ locating-dominating set of $K_{1, n-1}$; thus, $\gamma_L^1(K_{1, n-1})\ge n-1$. By Theorem~\ref{b_main}, $\gamma_L^1(K_{1, n-1})=n-1$. 

Second, suppose $\gamma_L^1(G)=n-1$. By Theorem~\ref{b_main}, $\dim_1(G)\in\{n-2, n-1\}$. To see this, if $\dim_1(G) \le n-3$, then $\gamma_L^1(G)\le \dim_1(G)+1\le n-2$ by Theorem~\ref{b_main}. If $\dim_1(G)=n-1$, then $G=K_n$ by Theorem~\ref{kdim_characterization}(d). If $\dim_1(G)=n-2$, then $G=P_4$, $G=K_{s,t}$ with $s,t\ge1$, $G=K_s+\overline{K}_t$ with $s\ge 1, t\ge2$, or $G=K_s+(K_1\cup K_t)$ with $s,t\ge 1$ by Theorem~\ref{kdim_characterization}(b). We note the following: (i) $\gamma_L^1(P_4)=2$ since the two leaves of $P_4$ form a minimum distance-$1$ locating-dominating set of $P_4$; (ii) $\gamma_L^1(K_{1,t})=\gamma_L^1(K_1+\overline{K}_t)=t$ as shown above; (iii) for $s,t\ge 2$, $\gamma_L^1(K_{s,t})=s+t-2=\gamma_L^1(K_{s}+\overline{K}_t)$ since all but one vertex from each of the two partite sets form a minimum distance-$1$ locating-dominating set of $K_{s,t}$; (iv) $K_1+(K_1 \cup K_1)=K_{1,2}$ and $\gamma_L^1(K_{1,2})=2$ as shown above; (v) for $t\ge2$, $\gamma_L^1(K_1+(K_1 \cup K_t))=t$ since all but one vertex of the $K_t$ and the leaf of $K_1+(K_1 \cup K_t)$ form a minimum distance-$1$ locating-dominating set of $K_1+(K_1 \cup K_t)$; (vi) for $s\ge 2$ and $t\ge1$, $\gamma_L^1(K_s+(K_1 \cup K_t))=s+t-1$ since all but one vertex of the $K_s$ and all vertices of the $K_t$ form a minimum distance-$1$ locating-dominating set of $K_s+(K_1 \cup K_t)$. So, $\gamma_L^1(G)=n-1$ implies $G=K_n$ or $G=K_{1, n-1}$.

(c) Let $k\ge 2$. Note that $\gamma_L^k(K_n)=n-1$ by Theorems~\ref{kdim_characterization}(d) and~\ref{b_main}. So, suppose $\gamma_L^k(G)=n-1$. Then $\dim_k(G)\in\{n-1,n-2\}$ by Theorem~\ref{b_main}. If $\dim_k(G)=n-1$, then $G=K_n$ by Theorem~\ref{kdim_characterization}(d). If $\dim_k(G)=n-2$ for $n\ge4$, then, by Theorem~\ref{kdim_characterization}(c), $G=K_{s,t}$ with $s,t\ge1$, $G=K_s+\overline{K}_t$ with $s\ge 1, t\ge2$, or $G=K_s+(K_1\cup K_t)$ with $s,t\ge 1$; then $\diam(G)=2$ and any minimum distance-$k$ resolving set of $G$ is also a distance-$k$ dominating set of $G$. So, $\dim_k(G)=n-2$ implies $\gamma_L^k(G)=n-2$ for $k\ge 2$.~\hfill
\end{proof}

\begin{question}
Can we characterize graphs $G$ of order $n$ such that $\gamma_L^k(G)=\beta$, where $\beta\in\{2,3,\ldots, n-2\}$? 
\end{question} 

Next, we examine the relation between $\gamma_L^k(G)$ and other parameters in Theorem~\ref{b_main}.

\begin{proposition}\label{parameter_gap} 
Let $G$ be a non-trivial connected graph, and let $k\in\mathbb{Z}^+$. Then 
\begin{itemize}
\item[(a)] $\gamma_L^k(G)-\dim_k(G) \in\{0,1\}$; 
\item[(b)] $\gamma_L^k(G)-\gamma_k(G)$ can be arbitrarily large;
\item[(c)] $(|V(G)|-1)-\gamma_L^k(G)$ can be arbitrarily large.
\end{itemize}
\end{proposition}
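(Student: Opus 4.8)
The plan is to verify each of the three parts by combining the general bounds of Theorem~\ref{b_main} with explicit families of graphs drawn from the earlier sections.

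For part (a), I would simply unpack Theorem~\ref{b_main}. That theorem gives $\dim_k(G) \le \gamma_L^k(G) \le \dim_k(G)+1$, so the difference $\gamma_L^k(G)-\dim_k(G)$ lies in $\{0,1\}$ immediately; no further work is needed. To see that both values in $\{0,1\}$ are actually attained (making the statement more than a vacuous containment), I would point to examples: for instance, any $G \in \cup_{i=2}^{k+1}\{P_i\}$ has $\dim_k(G)=1=\gamma_L^k(G)$ by Theorems~\ref{kdim_characterization}(a) and~\ref{loc_dom_characterization}(a), giving difference $0$, while $P_{k+2}$ has $\dim_k(P_{k+2})=1$ but $\gamma_L^k(P_{k+2})=2$ (since a single leaf resolves but fails to distance-$k$ dominate, as noted in the proof of Theorem~\ref{loc_dom_characterization}(a)), giving difference $1$.

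For parts (b) and (c), the strategy is to exhibit graph families on which the relevant gap grows without bound. For part (b), I would reuse the subdivided-star construction from the proof that $\frac{\dim_k(G)}{\gamma_k(G)}$ can be arbitrarily large: let $G$ be the spider obtained from $K_{1,\alpha}$ by subdividing each edge $k-1$ times, where $\alpha \ge 3$. There $\gamma_k(G)=1$ (the center distance-$k$ dominates, by Observation~\ref{obs_dom}(a)), while $\gamma_L^k(G) \ge \dim_k(G) = \alpha-1$ by Theorem~\ref{b_main}. Hence $\gamma_L^k(G)-\gamma_k(G) \ge \alpha-2 \to \infty$ as $\alpha \to \infty$. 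The only point requiring a short check is that the argument bounding $\dim_k(G)$ from below transfers verbatim, which it does since that argument already appears in the excerpt.

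For part (c), I need graphs where $\gamma_L^k(G)$ sits far below $|V(G)|-1$, i.e.\ where the distance-$k$ location-domination number is small relative to the order. The same subdivided spider works: it has order $n=1+\alpha k$, yet $\gamma_L^k(G) \le \dim_k(G)+1 = \alpha$ by Theorem~\ref{b_main}, so $(n-1)-\gamma_L^k(G) \ge \alpha k - \alpha = \alpha(k-1)$ for $k\ge 2$; to handle $k=1$ as well I would instead take a large path or a family whose order grows faster than $\gamma_L^k$, for example letting the subdivision length grow so that $n \to \infty$ while $\gamma_L^k$ stays bounded by $\alpha$. I expect the main (though still mild) obstacle to be choosing a single family that simultaneously makes the $k=1$ case of part (c) work cleanly, since for $k=1$ the spider degenerates to $K_{1,\alpha}$ with $\gamma_L^1(K_{1,\alpha})=\alpha-1=n-2$, which does \emph{not} give a growing gap; the cleanest fix is to use paths $P_n$, for which $\gamma_L^k(P_n)$ grows only like $n/(k+1)$ while $n-1$ grows linearly, so that $(n-1)-\gamma_L^k(P_n)\to\infty$, and I would defer the exact value of $\gamma_L^k(P_n)$ to the path computation promised in Section~\ref{sec_graphs} or verify a sufficient upper bound directly.
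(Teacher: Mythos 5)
Your proposal is correct, but for parts (b) and (c) it takes a different route from the paper. Part (a) is identical: both you and the paper read it off from Theorem~\ref{b_main} (your added examples showing that both values $0$ and $1$ occur are a bonus, not required by the statement). For (b) and (c) the paper uses a single family for all $k$ at once: the caterpillar-like tree obtained from a path $v_1,\ldots,v_x$ by attaching $\alpha\ge3$ leaves to each $v_i$, for which $\gamma_k\le x$, $x(\alpha-1)\le\gamma_L^k\le x\alpha$ (the lower bound via the twin-vertex Observation~\ref{obs_kdim}), and $|V(G)|=x(\alpha+1)$; letting $\alpha\to\infty$ gives (b) and letting $x\to\infty$ gives (c). You instead recycle the subdivided spider for (b), which is perfectly valid and arguably cleaner since $\dim_k=\alpha-1$ and $\gamma_k=1$ are already established in the paper and Theorem~\ref{b_main} does the rest. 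For (c) you correctly spot the one real pitfall of your choice --- the spider gives gap $\alpha(k-1)$, which vanishes at $k=1$ --- and patch it with paths; that works, since $\gamma_L^1(P_n)\le\dim_1(P_n)+1\approx 2n/5$ by Theorems~\ref{b_main} and~\ref{kdim_pathcycle}, so the gap grows linearly. One caveat: your alternative suggestion of ``letting the subdivision length grow so that $n\to\infty$ while $\gamma_L^k$ stays bounded by $\alpha$'' does not work as stated, because once the legs exceed length $k$ the distance-$k$ dimension of the spider grows linearly in the leg length; stick with the path family (or adopt the paper's caterpillar, which handles both parts and all $k$ uniformly).
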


\begin{proof}
Let $k\in\mathbb{Z}^+$. For (a), $0\le \gamma_L^k(G)-\dim_k(G)\le 1$ by Theorem~\ref{b_main}.

For (b) and (c), let $G$ be a tree obtained from the path $v_1, v_2, \ldots, v_x$ ($x\ge2$) by adding leaves $\ell_{i,1}, \ell_{i,2}, \ldots, \ell_{i, \alpha}$ ($\alpha\ge3$) to each vertex $v_i$, where $i\in[x]$; notice that $|V(G)|=x(\alpha+1)$. Since $\cup_{i=1}^{x}\{v_i\}$ is a distance-$k$ dominating set of $G$, $\gamma_k(G)\le x$. Note that $\gamma_L^k(G) \ge x(\alpha-1)$ by Observation~\ref{obs_kdim} since any distinct vertices in $\{\ell_{i,1}, \ell_{i,2}, \ldots, \ell_{i, \alpha}\}$ are twin vertices in $G$. Also, note that $\gamma_L^k(G) \le x\alpha$ since $V(G)-\cup_{i=1}^{x}\{\ell_{i, \alpha}\}$ is a distance-$k$ locating-dominating set of $G$. So, $\gamma_L^k(G)-\gamma_k(G)\ge x(\alpha-1)-x=x(\alpha-2) \rightarrow \infty$ as $x \rightarrow \infty$ or $\alpha\rightarrow\infty$, and $|V(G)|-1-\gamma_L^k(G)\ge x(\alpha+1)-1-x\alpha=x-1 \rightarrow\infty$ as $x \rightarrow \infty$.~\hfill
\end{proof}

In view of Theorem~\ref{b_main} and Proposition~\ref{parameter_gap}(b), we have the following

\begin{question}
Can we characterize graphs $G$ such that $\gamma_L^k(G)=\gamma_k(G)$?
\end{question}

Next, for a graph $G$ with $\gamma_L^k(G)=\beta$, we determine the upper bound of $|V(G)|$.

\begin{theorem}\emph{\cite{distKdim}}\label{kdim_maxorder}
If $\dim_k(G)=\beta$, then $|V(G)|\le (\lfloor\frac{2(k+1)}{3}\rfloor+1)^{\beta}+\beta\sum_{i=1}^{\lceil\frac{k+1}{3}\rceil}(2i-1)^{\beta-1}$.
\end{theorem}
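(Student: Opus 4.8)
The plan is to fix a distance-$k$ resolving set $S=\{u_1,\dots,u_\beta\}$ of size $\beta$ (which exists since $\dim_k(G)=\beta$) and to bound $|V(G)|$ by counting the distinct codes: the assignment $v\mapsto\code_{S,k}(v)$ is injective into $\{0,1,\dots,k+1\}^{\beta}$. The naive count $(k+2)^{\beta}$ is far too weak, so the idea is to split $V(G)$ according to the value $d(v,S)=\min_i d(v,u_i)$ relative to the threshold $t:=\lceil\frac{k+1}{3}\rceil$, estimating the two parts separately. The whole argument rests on a single elementary fact (the triangle inequality among actual distances) together with the arithmetic identity $\lfloor\frac{2m}{3}\rfloor+\lceil\frac{m}{3}\rceil=m$ applied with $m=k+1$, which gives $k+2-t=\lfloor\frac{2(k+1)}{3}\rfloor+1$.

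First I would handle the \emph{far} vertices, those with $d(v,S)\ge t$. For such a vertex every coordinate satisfies $d_k(v,u_i)=\min\{d(v,u_i),k+1\}\in\{t,t+1,\dots,k+1\}$, a set of exactly $k+2-t$ values (note $t\le k+1$, so the lower bound $t$ is genuinely attained after truncation). Hence there are at most $(k+2-t)^{\beta}=(\lfloor\frac{2(k+1)}{3}\rfloor+1)^{\beta}$ distinct codes here, producing the first term of the bound.

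Next I would handle the \emph{close} vertices, those with $d(v,S)=j$ for some $0\le j\le t-1$. For each such $v$ I select one nearest landmark $u_\ell$, so $d(v,u_\ell)=j$ and, since $j\le t-1\le k$, the $\ell$-th coordinate of the code equals $j$ exactly. For every other landmark $u_i$ the triangle inequality forces $d(v,u_i)\in[\,|d(u_\ell,u_i)-j|,\ d(u_\ell,u_i)+j\,]$, an interval containing at most $2j+1$ integers; because $x\mapsto\min\{x,k+1\}$ is non-decreasing, the truncated coordinate $d_k(v,u_i)$ then takes at most $2j+1$ values as well. Thus at most $(2j+1)^{\beta-1}$ codes are associated with each choice of (nearest landmark $u_\ell$, value $j$), and by injectivity the number of close vertices is at most $\beta\sum_{j=0}^{t-1}(2j+1)^{\beta-1}=\beta\sum_{i=1}^{t}(2i-1)^{\beta-1}$ after the substitution $i=j+1$. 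This is precisely the second term, with $t=\lceil\frac{k+1}{3}\rceil$ as the upper index. Adding the two contributions yields the claimed inequality.

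The step needing the most care is the close-vertex count: one must confirm that the triangle-inequality interval genuinely survives truncation, so that each off-diagonal coordinate really has at most $2j+1$ admissible truncated values \emph{even when} $d(u_\ell,u_i)>k+1$ (where truncation only collapses the interval further), and one must note that a vertex with several nearest landmarks is merely counted more than once, which can only inflate an upper bound and is therefore harmless. The remaining obstacle is purely arithmetic: verifying via the floor–ceiling identity that the threshold $t=\lceil\frac{k+1}{3}\rceil$ converts the generic two-term estimate $(k+2-t)^{\beta}+\beta\sum_{j=0}^{t-1}(2j+1)^{\beta-1}$ into the stated closed form (indeed this $t$ is exactly the value minimizing that estimate, as one checks by comparing consecutive thresholds), after which the proof is complete.
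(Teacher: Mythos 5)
Your proof is correct and takes essentially the same approach as the source: the paper quotes this theorem from \cite{distKdim} and notes in the subsequent Remark that the proof there adapts the method of \cite{Hernando}, which is precisely your partition of $V(G)$ into far vertices (all truncated coordinates confined to $\{t,\dots,k+1\}$ with $t=\lceil\frac{k+1}{3}\rceil$) and close vertices (a nearest landmark pinning one coordinate to $j$ and the triangle inequality restricting each remaining coordinate to at most $2j+1$ values), with $k+1$ playing the role of the diameter. All the details you flag as needing care (monotonicity of truncation, multiple nearest landmarks, the floor--ceiling identity) are handled correctly.
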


By Theorem~\ref{b_main}, $\gamma_L^k(G)=\beta$ implies $\dim_k(G)\le \beta$.  Theorem~\ref{kdim_maxorder} is sharp, and a graph $G$ attaining the maximum order must contain a vertex $\omega\in V(G)$ with $\code_{S,k}(\omega)=(\textbf{k+1})_{|S|}$ for any minimum distance-$k$ resolving set $S$ of $G$. The deletion of $\omega$ from $G$ leaves intact distance relations and code vectors; thus, we have the following sharp bound.

\begin{corollary}\label{L_maxorder}
If $\gamma_L^k(G)=\beta$, then $|V(G)|\le(\lfloor\frac{2(k+1)}{3}\rfloor+1)^{\beta}-1+\beta\sum_{i=1}^{\lceil\frac{k+1}{3}\rceil}(2i-1)^{\beta-1}$.
\end{corollary}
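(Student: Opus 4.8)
The plan is to derive Corollary~\ref{L_maxorder} as a direct consequence of Theorem~\ref{kdim_maxorder} together with Theorem~\ref{b_main}. First I would record the easy direction: by Theorem~\ref{b_main} we have $\dim_k(G)\le\gamma_L^k(G)$, so the hypothesis $\gamma_L^k(G)=\beta$ yields $\dim_k(G)\le\beta$. One might hope to simply substitute $\dim_k(G)\le\beta$ into Theorem~\ref{kdim_maxorder} and be done, but the resulting bound would be $|V(G)|\le f(\beta)$ where $f(\beta)$ is the full right-hand side of Theorem~\ref{kdim_maxorder}, which is exactly one larger than the claimed bound (it lacks the ``$-1$''). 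So the content of the corollary is precisely this improvement by one, and the whole point is to show that a graph with $\gamma_L^k(G)=\beta$ cannot attain the maximal order permitted by $\dim_k(G)=\beta$.

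The key idea, already hinted at in the surrounding text, is an extremal/deletion argument. I would split into two cases according to the value of $\dim_k(G)$. If $\dim_k(G)\le\beta-1$, then Theorem~\ref{kdim_maxorder} applied with exponent $\beta-1$ gives a bound strictly smaller than $f(\beta)-1$ (one checks the displayed expression is increasing in the dimension parameter), so the claimed inequality holds comfortably. The interesting case is $\dim_k(G)=\beta$. Here I fix a minimum distance-$k$ resolving set $S$ with $|S|=\beta$. Since $S$ is not by itself a distance-$k$ locating-dominating set (otherwise $\gamma_L^k(G)=\dim_k(G)=\beta$ would force $S$ to dominate, but we must check the count forces a nondominating code), there must exist a vertex $\omega$ with $\code_{S,k}(\omega)=(\textbf{k+1})_{|S|}$. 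Concretely: if every vertex had a code different from $(\textbf{k+1})_\beta$ then $S$ would already dominate and locate, contradicting the need to reach the \emph{maximal} order, so the extremal graph must carry exactly such an $\omega$. I would then delete $\omega$ from $G$ to form $G'$ and argue that $S$ remains a distance-$k$ resolving set of $G'$ of the same size, so $\dim_k(G')\le\beta$ and Theorem~\ref{kdim_maxorder} gives $|V(G')|\le f(\beta)$, whence $|V(G)|=|V(G')|+1\le f(\beta)+1$. This is the wrong direction, so the argument must instead run the other way: assume for contradiction $|V(G)|=f(\beta)$ (the extremal value), and show this forces the unique all-$(k+1)$ vertex $\omega$ to exist, whose deletion leaves distances and codes among the remaining vertices unchanged, so that $S$ is a distance-$k$ locating-dominating set of $G$ itself, giving $\gamma_L^k(G)\le\beta$ with the dominating requirement satisfied by all vertices except $\omega$; but $\omega$ is undominated, so $S$ fails, and $S\cup\{\omega\}$ of size $\beta+1$ would be needed.

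The cleanest formulation, which I would adopt, is contrapositive-free and runs as follows. Suppose $\gamma_L^k(G)=\beta$ and, toward a contradiction, $|V(G)|>f(\beta)-1$, i.e.\ $|V(G)|=f(\beta)$ since Theorem~\ref{kdim_maxorder} already caps it at $f(\beta)$. Because $G$ attains the extremal order for $\dim_k$, every minimum distance-$k$ resolving set $S$ must leave some vertex $\omega$ with $\code_{S,k}(\omega)=(\textbf{k+1})_{|S|}$ (an extremal graph cannot have all codes occupied and still reach the maximum, by the structure behind Theorem~\ref{kdim_maxorder}'s sharpness). Deleting $\omega$ preserves all pairwise distances and hence all codes of the remaining vertices, so $S$ resolves $V(G)-\{\omega\}$; moreover $d(u,S)\le k$ for every $u\ne\omega$. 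Thus $S$ locates and dominates every vertex except the single vertex $\omega$, which is the only vertex with the all-$(k+1)$ code. Since there is exactly one such vertex, $S\cup\{\omega\}$ is a distance-$k$ locating-dominating set, but $S$ alone is not (it fails to dominate $\omega$); this forces $\gamma_L^k(G)=|S|+1=\beta+1$, contradicting $\gamma_L^k(G)=\beta$. Hence $|V(G)|\le f(\beta)-1$, which is the claimed bound. For sharpness I would exhibit the extremal graph $H$ for Theorem~\ref{kdim_maxorder} with $\dim_k(H)=\beta$ and delete its unique all-$(k+1)$ vertex: the resulting graph $H'$ has $\dim_k(H')=\beta$, no vertex of all-$(k+1)$ code, hence $\gamma_L^k(H')=\beta$ and $|V(H')|=f(\beta)-1$.

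\emph{Main obstacle.} The delicate point is the assertion that a $\dim_k$-extremal graph \emph{must} possess a vertex whose code is $(\textbf{k+1})_{|S|}$, and that deleting it neither changes $\dim_k$ nor the surviving codes. This is the structural heart of the argument and leans entirely on the proof of sharpness of Theorem~\ref{kdim_maxorder}; I expect the careful bookkeeping to verify that removing $\omega$ preserves both the resolving property of $S$ and the distance values among $V(G)-\{\omega\}$ (so that no new coincidences of codes are introduced) to be where the real work lies, rather than in the elementary inequality $f(\beta)-1<f(\beta)$.
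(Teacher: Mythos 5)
Your argument is correct and follows essentially the same route as the paper: use Theorem~\ref{b_main} to get $\dim_k(G)\le\beta$, apply Theorem~\ref{kdim_maxorder}, and exclude the extremal order by noting that any graph attaining it must contain a vertex with code $(\textbf{k+1})_{|S|}$ undominated by every minimum distance-$k$ resolving set, which would force $\gamma_L^k(G)=\beta+1$. The structural fact you identify as the main obstacle is precisely what the paper also asserts without further justification, deferring to the sharpness construction for Theorem~\ref{kdim_maxorder} in the cited reference.
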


\begin{remark}
The proof for Theorem~\ref{kdim_maxorder} in~\cite{distKdim} uses a method similar to the one in~\cite{Hernando}. For a construction of graphs $G$ with $\dim_1(G)=\beta$ of maximum order $\beta+2^{\beta}$, we refer to~\cite{broadcast}. For a construction of graphs $G$ with $\dim_2(G)=\beta$ and of order $\beta+3^{\beta}$, we refer to~\cite{distKdim}; this construction is similar to the one provided in~\cite{Jesse}. 
\end{remark}

Next, for any non-trivial tree $T$ and for $k\in\mathbb{Z}^+$, we show that $\gamma_L^k(T) \le n-ex(T)$ and we characterize trees $T$ achieving equality.

\begin{proposition}\label{b_tree_upperbound}
For any tree $T$ of order $n\ge2$ and for any $k\in\mathbb{Z}^+$, $\gamma_L^k(T) \le n-ex(T)$. 
\end{proposition}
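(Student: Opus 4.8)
The plan is to build, for any tree $T$ with $ex(T)\ge1$, an explicit distance-$k$ locating-dominating set of cardinality exactly $n-ex(T)$ by deleting one leaf associated with each exterior major vertex. Write $ex(T)=p$ and let $v_1,\dots,v_p$ be the exterior major vertices of $T$. Each $v_i$ has positive terminal degree, so I can fix one terminal vertex $\ell_i$ of $v_i$; since the inequality $d(\ell,v)<d(\ell,w)$ defining a terminal vertex is strict, no leaf is terminal to two distinct major vertices, and hence $\ell_1,\dots,\ell_p$ are pairwise distinct leaves. Setting $S=V(T)\setminus\{\ell_1,\dots,\ell_p\}$ gives $|S|=n-p=n-ex(T)$, so it remains only to verify that $S$ is simultaneously a distance-$k$ dominating set and a distance-$k$ resolving set. (If $ex(T)=0$, as for a path, one takes $S=V(T)$ and the bound is immediate; thus I may assume $p\ge1$, so $T$ has a vertex of degree at least three and $n\ge4$.)

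First I would dispatch the two easy verifications. For domination, each deleted $\ell_i$ is a leaf, and since no two leaves of a tree on $n\ge3$ vertices are adjacent, the unique neighbor $s_i$ of $\ell_i$ is not a leaf, hence not a terminal vertex, hence $s_i\in S$; thus $d(\ell_i,s_i)=1\le k$ and $S$ dominates $T$. For resolving, note that any $z\in S$ distinguishes itself from every other vertex because $d_k(z,z)=0$ while $d_k(y,z)\ge1$ for $y\ne z$; consequently every pair of distinct vertices meeting $S$ is automatically resolved, and the only pairs left to examine are $\{\ell_i,\ell_j\}$ with $i\ne j$. For such a pair I would use the vertex $s_i\in S$: clearly $d_k(\ell_i,s_i)=1$, so it suffices to prove $d(\ell_j,s_i)\ge2$.

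The step I expect to carry the actual content is showing that terminal vertices $\ell_i,\ell_j$ of \emph{distinct} exterior major vertices cannot share a neighbor. Suppose instead that $\ell_j$ were adjacent to $s_i$. Because $\ell_i$ and $\ell_j$ are leaves, every path leaving either of them passes through $s_i$, so $d(\ell_i,w)=1+d(s_i,w)=d(\ell_j,w)$ for all vertices $w\ne\ell_i,\ell_j$; applying this to the major vertices of $T$ and invoking the strict minimality in the terminal-vertex definition, the unique nearest major vertex to $s_i$ would have to equal both $v_i$ and $v_j$, contradicting $v_i\ne v_j$. Hence $\ell_j$ is not adjacent to $s_i$, and since $\ell_j\ne s_i$ (two adjacent leaves would force $T=K_2$) we get $d(\ell_j,s_i)\ge2$, whence $d_k(\ell_j,s_i)\ge2\ne1=d_k(\ell_i,s_i)$. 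This resolves every remaining pair, so $S$ is a distance-$k$ locating-dominating set and $\gamma_L^k(T)\le|S|=n-ex(T)$. The only genuinely nontrivial ingredient is this last non-adjacency argument, which is precisely where the strictness in the definition of a terminal vertex is essential.
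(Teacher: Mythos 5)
Your proof is correct and uses the same construction as the paper: delete one chosen terminal vertex of each exterior major vertex and show that the remaining $n-ex(T)$ vertices form a distance-$k$ locating-dominating set. The paper simply asserts that this set works (handling $ex(T)\in\{0,1\}$ separately via Theorem~\ref{b_main}), whereas you supply the verification in full, including the key point that terminal vertices of distinct exterior major vertices cannot share a common neighbor.
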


\begin{proof}
Let $T$ be a tree of order $n\ge 2$ and let $k\in\mathbb{Z}^+$. If $ex(T)\in\{0,1\}$, then $\gamma_L^k(T)\le n-1\le n-ex(T)$ by Theorem~\ref{b_main}. So, suppose $ex(T)=x\ge 2$; let $v_1, v_2, \ldots, v_x$ be the exterior major vertices of $T$. For each $i\in[x]$, let $\{\ell_{i, 1}, \ell_{i,2}, \ldots, \ell_{i, \sigma_i}\}$ be the set of terminal vertices of $v_i$ in $T$ with $ter(v_i)=\sigma_i\ge 1$. Since $S=V(T)-\cup_{i=1}^{x}\{\ell_{i,1}\}$ is a distance-$k$ locating-dominating set of $T$ with $|S|=n-x=n-ex(T)$, $\gamma_L^k(T) \le n-ex(T)$.~\hfill
\end{proof}

Next, we characterize non-trivial trees $T$ satisfying $\gamma_L^k(T)= |V(T)|-ex(T)$. We recall some terminology. An \emph{exterior degree-two vertex} is a vertex of degree two that lies on a path from a terminal vertex to its major vertex, and an \emph{interior degree-two vertex} is a vertex of degree two such that the shortest path to any terminal vertex includes a major vertex.

\begin{theorem}\label{b_tree_equality}
Let $T$ be any tree of order $n\ge 2$ and let $k\in\mathbb{Z}^+$. Then 
$\gamma_L^k(T)=n-ex(T)$ if and only if $k=1$, $ex(T)\ge 1$, and $ex(T)+\sigma(T)=n$.
\end{theorem}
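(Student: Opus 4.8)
The plan is to analyze the equality condition by understanding precisely when the distance-$k$ locating-dominating set $S=V(T)-\cup_{i=1}^{x}\{\ell_{i,1}\}$ constructed in Proposition~\ref{b_tree_upperbound} is optimal, and when a smaller set exists. The condition $ex(T)+\sigma(T)=n$ asserts that every vertex of $T$ is either a leaf or an exterior major vertex; that is, $T$ has no support vertex of degree two, no interior degree-two vertices, and in fact no non-leaf vertices other than exterior major vertices. Combined with $k=1$, this should force the tree into a very rigid ``spider-like'' or ``caterpillar-like'' shape where each exterior major vertex is adjacent to several leaves, and I would first work out exactly which trees satisfy $ex(T)+\sigma(T)=n$.

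For the sufficiency direction, I would assume $k=1$, $ex(T)\ge 1$, and $ex(T)+\sigma(T)=n$, and show $\gamma_L^1(T)\ge n-ex(T)$, which with Proposition~\ref{b_tree_upperbound} gives equality. The key lever is Observation~\ref{obs_kdim}: since every non-leaf is an exterior major vertex adjacent only to leaves and other major vertices, the leaves hanging off any single $v_i$ form a large class of mutually twin vertices, so any distance-$1$ resolving set must contain all but one of them. Counting the forced vertices across all exterior major vertices, together with the domination requirement, should yield the lower bound $n-ex(T)$; the role of $k=1$ here is that for larger $k$ the truncated metric $d_k$ can distinguish vertices at distance up to $k+1$, weakening the twin obstruction and allowing smaller sets.

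For the necessity direction, I would prove the contrapositive: if any of the three conditions fails, then $\gamma_L^k(T)<n-ex(T)$ (or $ex(T)=0$ makes the claim vacuous/contradictory). I would split into cases. If $ex(T)=0$ (so $T$ is a path), then Theorem~\ref{b_main} or direct computation beats the bound. If $ex(T)+\sigma(T)<n$, there exists a non-leaf, non-exterior-major vertex — a degree-two vertex lying on some path — and I would exhibit a distance-$k$ locating-dominating set omitting more than one vertex per exterior major vertex (for instance, absorbing such intermediate vertices into savings), producing a set of size strictly below $n-ex(T)$. If $k\ge 2$, I would use the slack in $d_k$: even on a tree with $ex(T)+\sigma(T)=n$, twin leaves at distance $2\le k+1$ can be resolved by vertices placed on neighboring branches, so one can drop additional leaves and still resolve, giving a strictly smaller set.

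The main obstacle I anticipate is the necessity direction when $k\ge 2$: showing that the truncation genuinely buys a strictly smaller locating-dominating set requires carefully constructing such a set and verifying both the resolving and dominating properties simultaneously on an arbitrary tree of the rigid shape, rather than on a single example. The cleanest route is probably to pin down the structure of trees with $ex(T)+\sigma(T)=n$ first, since that structural classification will make all three failure cases tractable by direct construction, and will also streamline the sufficiency count via Observation~\ref{obs_kdim}.
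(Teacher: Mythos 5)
Your overall outline matches the paper's proof: Observation~\ref{obs_kdim} forces all but one twin leaf at each exterior major vertex for the lower bound, and the necessity direction is handled by exhibiting explicit locating-dominating sets of size below $n-ex(T)$ whenever one of the three conditions fails. However, there is a genuine conceptual error in how you use $k$, and it breaks the two places where you invoke it. Twin vertices $x,y$ satisfy $d(x,z)=d(y,z)$ for every $z\notin\{x,y\}$, hence $d_k(x,z)=d_k(y,z)$ for \emph{every} $k$; Observation~\ref{obs_kdim} is stated for all $k\in\mathbb{Z}^+$ precisely because the twin obstruction is independent of $k$. So your claim that for $k\ge 2$ ``twin leaves at distance $2\le k+1$ can be resolved by vertices placed on neighboring branches, so one can drop additional leaves'' is false: no distance-$k$ resolving set can omit two leaves hanging from the same exterior major vertex. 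Likewise, in the sufficiency direction the role of $k=1$ is not that it strengthens the twin obstruction (which is the same for all $k$); it is that the closed neighborhoods $N[\ell_{i,1}]$ of the omitted leaves are pairwise disjoint, so distance-$1$ domination forces one additional vertex from each $\{v_i,\ell_{i,1}\}$ (Observation~\ref{obs_dom}(b)), yielding $n-2x+x=n-x$. For $k\ge 2$ these $k$-neighborhoods overlap and that forcing disappears.

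Consequently your plan for the $k\ge2$ case of the necessity direction would fail as stated, and this is exactly the step you identified as the main obstacle. The correct saving (the one the paper uses) comes not from dropping extra leaves but from additionally dropping one exterior major vertex: with every vertex of $T$ a leaf or an exterior major vertex, the set $R=V(T)-(\{v_1\}\cup(\cup_{i=1}^{x}\{\ell_{i,1}\}))$ is still distance-$k$ resolving, and for $k\ge2$ it is also distance-$k$ dominating because $\ell_{1,1}$ is within distance $2\le k$ of the other neighbors of $v_1$; this gives $\gamma_L^k(T)\le n-ex(T)-1$. A smaller point: a non-leaf, non-exterior-major vertex need not have degree two — it may be a major vertex of terminal degree zero — so the case $ex(T)+\sigma(T)<n$ requires separate constructions for interior degree-two vertices, exterior degree-two vertices (with subcases on $ter(v_i)$), and terminal-degree-zero major vertices, as in the paper's proof.
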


\begin{proof}
Let $T$ be a tree of order $n\ge 2$ and let $k\in\mathbb{Z}^+$. If $ex(T)=x\ge 1$, let $v_1,v_2, \ldots, v_x$ be the exterior major vertices of $T$, and let $\{\ell_{i,1}, \ell_{i,2}, \ldots, \ell_{i, \sigma_i}\}$ be the set of terminal vertices of $v_i$ with $ter(v_i)=\sigma_i\ge1$ in $T$ for each $i\in[x]$. 

($\Leftarrow$) Let $k=1$, $ex(T)=x\ge 1$, and $ex(T)+\sigma(T)=n$; notice that $T$ is a caterpillar. Let $S$ be an arbitrary minimum distance-$1$ locating-dominating set of $T$. By Observation~\ref{obs_kdim}, $|S \cap \{\ell_{i,1}, \ell_{i,2}, \ldots, \ell_{i, \sigma_i}\}|\ge \sigma_i-1$. Thus, up to a relabeling of vertices of $T$, we may assume that $S\supseteq V(T)-\cup_{i=1}^{x}\{v_i, \ell_{i,1}\} $. Since $N[\ell_{i,1}] \cap N[\ell_{j,1}]=\emptyset$ for $i\neq j$, a vertex in $\{v_i, \ell_{i,1}\}$ (for each $i\in[x]$) must also belong to $S$ by Observation~\ref{obs_dom}(b). So, $\gamma_L^1(T)\ge n-ex(T)$. Since $\gamma_L^1(T) \le n-ex(T)$ by Proposition~\ref{b_tree_upperbound}, $\gamma_L^1(T)=n-ex(T)$.

($\Rightarrow$) Let $\gamma_L^k(T)=n-ex(T)$. If $ex(T)=0$, then $\gamma_L^k(T)<n-ex(T)$ by Theorem~\ref{b_main}. So, let $ex(T)=x\ge 1$. We will show that $T$ has no major vertex of terminal degree zero and no degree-two vertex; i.e., each vertex in $T$ is either an exterior major vertex or a leaf.

If $T$ contains either an interior degree-two vertex $w$ or a major vertex $w'$ with $ter(w')=0$, then $A=V(T)-(\{u\} \cup (\cup_{i=1}^{x}\{\ell_{i,1}\}))$, where $u\in\{w, w'\}$, forms a distance-$k$ locating-dominating set of $T$; thus $\gamma_L^k(T)\le n-(x+1)<n-ex(T)$. Now, suppose $T$ contains an exterior degree-two vertex, say $z$. By relabeling the vertices of $T$ if necessary, we may assume that $z$ lies on the $v_i-\ell_{i,1}$ path in $T$ for some $i\in[x]$. If $ter(v_i)\ge 2$, then $B=V(T)-(\{z\} \cup (\cup_{j=1}^{x}\{\ell_{j, \sigma_j}\}))$ forms a distance-$k$ locating-dominating set of $T$. If $ter(v_i)=1$, then $C=V(T)-(\{v_i\} \cup (\cup_{j=1}^{x}\{\ell_{j,1}\}))$ forms a distance-$k$ locating-dominating set of $T$. (It is easy to see that the sets $A$, $B$, and $C$ are distance-$1$ locating-dominating; then apply  Observation~\ref{obs_mixed}(c) for $k\geq 1$.) In each case, $\gamma_L^k(T)\le n-(x+1)<n-ex(T)$. 

So, each vertex in $T$ is either an exterior major vertex or a leaf; thus $ex(T)+\sigma(T)=n$. Now, if $k\ge 2$, then $R=V(T)-(\{v_1\} \cup (\cup_{i=1}^{x} \{\ell_{i,1}\}))$ forms a distance-$k$ locating-dominating set of $T$, and hence $\gamma_L^k(T)\le |R|=n-ex(T)-1<n-ex(T)$. Thus, $k=1$.~\hfill
\end{proof}


\section{$\gamma_L^k(G)$ of some classes of graphs}\label{sec_graphs}

In this section, for any $k\in\mathbb{Z}^+$, we determine $\gamma_L^k(G)$ when $G$ is the Petersen graph, a complete multipartite graph, a cycle or a path. We begin with the following observations.

\begin{observation}\emph{\cite{moreno1, distKdim}}\label{obs_kdim_d}
Let $G$ be a connected graph with $\diam(G)=d\ge2$, and let $k\in\mathbb{Z}^+$. If $k\ge d-1$, then $\dim_k(G)=\dim(G)$.
\end{observation}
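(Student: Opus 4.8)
The plan is to prove that $\dim_k(G)=\dim(G)$ whenever $k\ge d-1$, where $d=\diam(G)\ge2$. The essential insight is that the truncated metric $d_k$ and the ordinary metric $d$ agree on all pairs of vertices as soon as $k$ is large enough to exceed every distance that actually occurs in $G$. First I would observe that, since $\diam(G)=d$, we have $d(x,y)\le d$ for all $x,y\in V(G)$, and hence $d(x,y)\le d-1$ for any $x\neq y$ unless $d(x,y)=d$. The critical case is distinguishing a vertex $z$ from two others $x,y$: the code entries are $d_k(x,z)=\min\{d(x,z),k+1\}$ and $d_k(y,z)=\min\{d(y,z),k+1\}$.

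Next I would argue that when $k\ge d-1$, we have $k+1\ge d$, so for every pair of vertices $u,w$ the actual distance satisfies $d(u,w)\le d\le k+1$, which forces $d_k(u,w)=\min\{d(u,w),k+1\}=d(u,w)$. In other words, truncation at level $k+1$ never actually truncates anything, because no distance in $G$ exceeds $k+1$. Therefore the two metrics $d_k$ and $d$ coincide as functions on $V(G)\times V(G)$.

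With that pointwise equality in hand, the proof concludes immediately by unwinding the definitions: a set $S$ distinguishes $x$ from $y$ under $d_k$ (i.e.\ there is $z\in S$ with $d_k(x,z)\neq d_k(y,z)$) if and only if it distinguishes them under $d$, since the relevant code entries are literally equal. Hence $S$ is a distance-$k$ resolving set if and only if it is a resolving set, the two families of resolving sets coincide, and taking minimum cardinalities gives $\dim_k(G)=\dim(G)$. Alternatively, one can cite Observation~\ref{obs_mixed}(b), which already gives $\dim(G)\le\dim_k(G)$ for all $k$; then it suffices to establish the reverse inequality $\dim_k(G)\le\dim(G)$, which follows because any ordinary resolving set $S$ remains distance-$k$ resolving under the metric identification above.

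I do not anticipate a genuine obstacle here, as the statement is essentially a definitional consequence of the diameter bound; the only point requiring care is the boundary arithmetic, namely verifying that $k\ge d-1$ is exactly the condition $k+1\ge d$ needed to guarantee $d(u,w)\le k+1$ for all pairs, so that no distance is ever clipped by the truncation. One should also note the hypothesis $d\ge 2$ merely ensures $\dim(G)$ and $\dim_k(G)$ are being compared on a non-degenerate graph and does not otherwise enter the argument.
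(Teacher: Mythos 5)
Your proof is correct and is the standard argument for this fact: since $k+1\ge d=\diam(G)$, the truncation in $d_k(x,y)=\min\{d(x,y),k+1\}$ never activates, so $d_k$ and $d$ coincide on $V(G)\times V(G)$ and the families of resolving sets are identical. The paper states this observation with citations rather than a proof, but your argument is exactly the definitional verification those sources rely on, including the correct boundary arithmetic $k\ge d-1\iff k+1\ge d$.
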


\begin{observation}\label{obs_mixed2}
Let $G$ be any connected graph, and let $k, k'\in\mathbb{Z}^+$. Then 
\begin{itemize}
\item[(a)] for $k>k'$, $\gamma_L^k(G) \le \gamma_L^{k'}(G)\le \gamma_L^1(G)$;
\item[(b)] if $k\ge \diam(G)$, then $\gamma_L^k(G)=\dim_k(G)$.
\end{itemize}
\end{observation}

Next, we determine $\gamma_L^k(\mathcal{P})$ for the Petersen graph $\mathcal{P}$.

\begin{example} Let $\mathcal{P}$ be the Petersen graph with the the following presentation: two disjoint copies of $C_5$ are given by $u_1, u_2, u_3,u_4,u_5,u_1$ and $w_1,w_3,w_5,w_2,w_4,w_1$, respectively, and the remaining edges are $u_iw_i$ for each $i\in[5]$. Then, for $k\in \mathbb{Z}^+$, 
$$\gamma_L^k(\mathcal{P})=\left\{
\begin{array}{ll}
\dim_k(\mathcal{P})+1=4 & \mbox{ if } k=1,\\
\dim_k(\mathcal{P})=3 & \mbox{ if } k\ge 2.
\end{array}\right.$$
To see this, note that $\dim(\mathcal{P})=3$ (see~\cite{petersen}) and $\diam(\mathcal{P})=2$. For any $k\ge2$, $\gamma_L^k(\mathcal{P})=\dim_k(\mathcal{P})=\dim(\mathcal{P})=3$ by Observations~\ref{obs_kdim_d} and ~\ref{obs_mixed2}(b). Next, we show that $\gamma_L^1(\mathcal{P})=4$. For any minimum distance-$1$ resolving set $S$ of $\mathcal{P}$, we may assume $u_1\in S$ since $\mathcal{P}$ is vertex-transitive. It was shown in~\cite{mbrg} that there are six such $S$ containing $u_1$ (i.e., $\{u_1,w_2,w_3\}$, $\{u_1,u_4,w_2\}$, $\{u_1,w_4,w_5\}$, $\{u_1,u_3,w_5\}$, $\{u_1,u_4,w_3\}$ and $\{u_1,u_3,w_4\}$). Since none of those six sets $S$ containing $u_1$ form a distance-$1$ dominating set of $\mathcal{P}$, $\gamma_L^1(\mathcal{P})\ge \dim_1(\mathcal{P})+1=4$. Since $\{u_1, u_4, w_2, w_3\}$ is a distance-$1$ locating-dominating set of $\mathcal{P}$, $\gamma_L^1(\mathcal{P})\le 4$; thus, $\gamma_L^1(\mathcal{P})=\dim_1(\mathcal{P})+1=4$.
\end{example}

Next, we determine $\gamma_L^k(G)$ when $G$ is a complete multipartite graph. 

\begin{proposition}\emph{\cite{multipartite}}\label{dim_kpartite}
For $m \ge 2$, let $G=K_{a_1, a_2, \ldots, a_m}$ be a complete $m$-partite graph of order $n=\sum_{i=1}^{m}a_i\ge3$. Let $s$ be the number of partite sets of $G$ consisting of exactly one element. Then 
$$\dim(G)=\left\{
\begin{array}{ll}
n-m & \mbox{ if } s=0,\\
n-m+s-1 & \mbox{ if } s \neq 0.
\end{array}\right.$$
\end{proposition}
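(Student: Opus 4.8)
The plan is to reduce the whole statement to the twin structure of $G=K_{a_1,\dots,a_m}$ together with the elementary distance formula on a complete multipartite graph. First I would record that, for distinct $x,y\in V(G)$, we have $d(x,y)=1$ when $x,y$ lie in different partite sets and $d(x,y)=2$ when they share a common partite set (necessarily of size at least two); in particular $\diam(G)\le 2$. I would then classify the twin equivalence classes. Two vertices in the same partite set of size $\ge 2$ are (non-adjacent) twins; any two vertices that are each a singleton partite set are (adjacent) twins; and no other pair of vertices is a twin. Hence the twin classes are exactly the $m-s$ partite sets of size $\ge 2$ together with a single class consisting of all $s$ singleton vertices (when $s\ge 1$). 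Writing $p$ for the number of twin classes, this gives $p=m$ if $s=0$ and $p=m-s+1$ if $s\ge 1$, so that $n-p$ equals $n-m$ and $n-m+s-1$, respectively, which are precisely the two claimed values.

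For the lower bound I would invoke the standard twin principle underlying Observation~\ref{obs_kdim}, in its ordinary-distance form: since twins have equal distance to every third vertex, any two members of a common twin class that both avoid a resolving set are not resolved. Consequently a resolving set must contain all but at most one vertex of each twin class, and summing over the $p$ pairwise disjoint classes yields $\dim(G)\ge \sum_i (t_i-1)=n-p$, where $t_i$ is the size of the $i$-th class.

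For the matching upper bound I would exhibit a resolving set of size $n-p$ by deleting exactly one vertex from each twin class, i.e. one vertex from each partite set of size $\ge 2$ and, when $s\ge 1$, one of the singleton vertices. Let $R$ be the set of surviving vertices, so $|R|=n-p$. To verify that $R$ resolves $G$ it suffices to compare the codes of the deleted vertices, since every vertex of $R$ is distinguished by the $0$ in its own coordinate. A deleted vertex of a size-$\ge 2$ part $V_i$ has distance $2$ exactly to the (nonempty) set $V_i\cap R$ and distance $1$ to every other vertex of $R$, so its code carries a $2$ in a block of coordinates private to $V_i$; the single deleted singleton, if any, is adjacent to all of $R$ and hence has the all-ones code. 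These codes are pairwise distinct, so $R$ is a resolving set and $\dim(G)\le n-p$; combining the two bounds proves the formula.

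The main obstacle is entirely in bookkeeping the twin classes correctly: the nontrivial point is that all singleton parts coalesce into one true-twin class, which is exactly what produces the $s\ne 0$ correction term $+s-1$, and one must confirm the degenerate cases $s=1$ (where the singleton class is a single vertex and $n-p=n-m$ agrees with both branches) and $s=m$, $G=K_n$ (where $n-p=n-1$). Once the class count $p$ is pinned down, both the lower bound and the verification of the construction are routine.
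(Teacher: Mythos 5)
Your proof is correct: the twin-class bookkeeping (each part of size at least two forming a class of non-adjacent twins, all singleton parts coalescing into one class of adjacent twins, and no twins across classes) together with the lower bound ``all but one vertex per class'' and the explicit resolving set of size $n-p$ is a complete and valid derivation of the formula. Note that the paper itself states this proposition as a cited result from the literature and gives no proof, so there is nothing to compare against; your argument matches the standard approach used for this result.
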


\begin{proposition}
For $m \ge 2$, let $G=K_{a_1, a_2, \ldots, a_m}$ be a complete $m$-partite graph of order $n=\sum_{i=1}^{m}a_i\ge3$. For $k\in\mathbb{Z}^+$,  
$$\gamma_L^k(G)=\left\{
\begin{array}{ll}
\dim_k(G)+1=n-1 & \mbox{ if } k=1 \mbox{ and } G=K_{1, n-1},\\
\dim_k(G) & \mbox{ otherwise}.
\end{array}\right.$$
\end{proposition}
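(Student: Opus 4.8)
The plan is to split on whether $G$ is complete and on the size of $k$ relative to $\diam(G)$, since $G=K_{a_1,\dots,a_m}$ satisfies $\diam(G)=1$ exactly when all $a_i=1$ (i.e. $G=K_n$) and $\diam(G)=2$ otherwise (some $a_i\ge2$). First I would dispatch every case with $k\ge\diam(G)$: if $G=K_n$ then $\diam(G)=1\le k$ for all $k$, and if $\diam(G)=2$ then $k\ge2$ gives $k\ge\diam(G)$; in either situation Observation~\ref{obs_mixed2}(b) yields $\gamma_L^k(G)=\dim_k(G)$ immediately. Since for $n\ge3$ the graph $K_n$ is not of the form $K_{1,n-1}$, all of these cases correctly fall into the ``otherwise'' branch of the formula.

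This reduces everything to $k=1$ with $\diam(G)=2$. Here Observation~\ref{obs_kdim_d} gives $\dim_1(G)=\dim(G)$, which is computed by Proposition~\ref{dim_kpartite}, while Theorem~\ref{b_main} (equivalently Proposition~\ref{parameter_gap}(a)) confines $\gamma_L^1(G)$ to $\{\dim_1(G),\dim_1(G)+1\}$. Thus it remains only to decide, for each such $G$, whether some minimum distance-$1$ resolving set is already a distance-$1$ dominating set. For $G=K_{1,n-1}$ I would simply invoke Theorem~\ref{loc_dom_characterization}(b), which gives $\gamma_L^1(K_{1,n-1})=n-1=\dim_1(K_{1,n-1})+1$, the claimed exceptional value.

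For every remaining graph (so $\diam(G)=2$, $k=1$, $G\ne K_{1,n-1}$) the goal is to exhibit a minimum distance-$1$ resolving set that also dominates, forcing $\gamma_L^1(G)=\dim_1(G)$. Writing $V_1,\dots,V_m$ for the partite sets, I would exploit that same-part vertices are twins with mutual distance $2$ while cross-part distances are $1$, so a vertex $z$ separates a cross-part pair $x\in V_i$, $y\in V_j$ precisely when $z\in V_i\cup V_j$. The construction is to delete one vertex from each non-singleton part and, if singleton parts exist, to delete exactly one further singleton $v_0$ while retaining all the remaining singletons; the kept set $S$ then has cardinality $\dim(G)$ by Proposition~\ref{dim_kpartite}, and by Observation~\ref{obs_kdim} together with the cross-part separation fact it is distance-$1$ resolving. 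The crux is domination: $v_0$ is dominated because the nonempty $S$ lies entirely in its neighborhood, while a deleted vertex $\ell_i$ of a non-singleton $V_i$ fails to be dominated only if $S\subseteq V_i$. I would then argue that $S\subseteq V_i$ forces $V_i$ to be the unique non-singleton part with all singletons deleted, i.e. $m=2$ and $s=1$, which is exactly the excluded graph $K_{1,n-1}$. Hence $S$ dominates, giving $\gamma_L^1(G)=\dim_1(G)$.

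The main obstacle, and the only delicate point, is pinning down precisely why $K_{1,n-1}$ is the sole exception: it is the unique complete multipartite graph of diameter $2$ in which every minimum distance-$1$ resolving set is forced inside a single non-singleton part (the leaf class), leaving the lone opposite leaf adjacent only to the undominated center. Making the implication ``$S\subseteq V_i\Rightarrow G=K_{1,n-1}$'' airtight — rather than merely verifying the constructed $S$ — is where care is needed, but it follows cleanly from the twin-forced lower bound $|S\cap V_i|\ge a_i-1$ and the requirement that at most one part be disjoint from $S$.
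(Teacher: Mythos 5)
Your proposal is correct and follows essentially the same route as the paper: dispose of $K_n$ and of all $k\ge 2$ cases via $k\ge\diam(G)$, quote Theorem~\ref{loc_dom_characterization}(b) for $K_{1,n-1}$, and for the remaining $k=1$ graphs show a minimum distance-$1$ resolving set can be chosen to dominate. The only difference is that you make explicit (via the ``delete one vertex per non-singleton part plus one singleton'' construction and the $S\subseteq V_i\Rightarrow G=K_{1,n-1}$ argument) a step the paper merely asserts, which is a welcome addition rather than a divergence.
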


\begin{proof}
Let $G=K_{a_1, a_2, \ldots, a_m}$ be a complete $m$-partite graph of order $n=\sum_{i=1}^{m}a_i\ge3$, where $m\ge 2$, and let $k\in\mathbb{Z}^+$. Note that $\diam(G)\in\{1,2\}$, where $\diam(G)=1$ if and only if $G=K_n$ and $\gamma_L^k(K_n)=\dim_k(K_n)=n-1$, for any $k\ge1$, by Theorems~\ref{kdim_characterization}(d) and~\ref{b_main}. If $\diam(G)=2$ and $k\ge 2$, then $\gamma_L^k(G)=\dim_k(G)=\dim(G)$ by Observations~\ref{obs_kdim_d} and~\ref{obs_mixed2}(b). So, suppose $\diam(G)=2$ and $k=1$. Let $s$ be the number of partite sets of $G$ consisting of exactly one element. If $s=0$, then any minimum distance-$1$ resolving set of $G$ is also a distance-$1$ dominating set of $G$; thus, $\gamma_L^1(G)=\dim_1(G)$. If $s=1$ with $m=2$, then $G=K_{1, n-1}$ and $\gamma_L^1(K_{1,n-1})=n-1=\dim_1(K_{1, n-1})+1$ by Theorems~\ref{kdim_characterization}(b) and~\ref{loc_dom_characterization}(b). If either $s=1$ with $m\ge 3$ or $s\ge 2$, then any minimum distance-$1$ resolving set of $G$ is also a distance-$1$ dominating set of $G$, and hence $\gamma_L^1(G)=\dim_1(G)$.~\hfill
\end{proof}

Next, we determine $\gamma_L^k(G)$ when $G$ is a cycle or a path. 

\begin{theorem}\emph{\cite{distKdim}}\label{kdim_pathcycle}
Let $k\in\mathbb{Z}^+$. Then 
\begin{itemize}
\item[(a)] $\dim_k(P_n)=1$ for $2\le n \le k+2$;
\item[(b)] $\dim_k(C_n)=2$ for $3\le n \le 3k+3$, and $\dim_k(P_n)=2$ for $k+3 \le n \le 3k+3$;
\item[(c)] for $n \ge 3k+4$, 
$$\dim_{k}(C_n)=\dim_k(P_n)=\left\{
\begin{array}{ll}
\lfloor\frac{2n+3k-1}{3k+2}\rfloor & \mbox{ if } n \equiv 0,1,\ldots, k+2 \!\!\pmod{(3k+2)},\\ 
\lfloor\frac{2n+4k-1}{3k+2}\rfloor & \mbox{ if } n \equiv k+3,\ldots,  \lceil \frac{3k+5}{2}\rceil-1 \!\!\pmod{(3k+2)},\\ 
\lfloor\frac{2n+3k-1}{3k+2}\rfloor & \mbox{ if } n \equiv \lceil \frac{3k+5}{2}\rceil,\ldots, 3k+1 \!\!\pmod{(3k+2)}.
\end{array}\right.$$
\end{itemize}
\end{theorem}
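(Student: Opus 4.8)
The plan is to dispatch the small-order items (a) and (b) directly and to prove the general formula (c) by establishing matching upper and lower bounds; throughout, the governing object is the folded distance $d_k(\cdot,u)=\min\{d(\cdot,u),k+1\}$, which gives each vertex within distance $k$ of a landmark $u$ a distinct ``small'' value and saturates (to $k+1$) every vertex farther away. For (a), a leaf of $P_n$ with $n\le k+2$ receives the pairwise-distinct folded values $0,1,\dots,n-1$, so one leaf resolves and minimality is immediate. For (b) the lower bound $\dim_k\ge 2$ holds for $C_n$ because a single landmark folds its two sides together (the two vertices at each distance $j$ collide), and for $P_n$ with $n\ge k+3$ because one landmark cannot simultaneously avoid a mirror pair (if it is not a leaf) and a saturated pair (if it is a leaf of a path that long); the matching upper bound is a direct verification that two well-placed vertices resolve $C_n$ (resp.\ $P_n$) as soon as $n\le 3k+3$.

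For the upper bound in (c) I would exhibit an explicit periodic resolving set: index the vertices and place landmarks so that the successive gaps (numbers of non-landmark vertices between consecutive landmarks) alternate between $k$ and $2k$, using two landmarks per block of $3k+2$ vertices. This set is distance-$k$ resolving for two reasons: (i) a gap of size $\le 2k$ leaves no fully saturated vertex, so every vertex carries a small entry pinning down a specific landmark; and (ii) at each landmark $a$ the two adjacent gaps have \emph{different} sizes, so a vertex at distance $j\le k$ on the short side of $a$ is additionally covered by the next landmark while its mirror image on the long side is not, which breaks the reflection about $a$ that is the only remaining obstruction. Fitting this pattern to a general $n=q(3k+2)+r$ with $0\le r\le 3k+1$ (occasionally stretching one gap to size $2k+1$ to absorb a single saturated vertex) yields a minimum landmark count equal to $2q$, $2q+1$, or $2q+2$ according as $r\in\{0,1\}$, $r\in[2,k+2]$, or $r\in[k+3,3k+1]$. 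The stated three-branch expression is exactly this step function re-encoded through floor terms, the numerator switching from $3k-1$ to $4k-1$ on the band $[k+3,\lceil\tfrac{3k+5}{2}\rceil-1]$ precisely so that the floor is nudged up by one where needed.

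For the lower bound I would show that any distance-$k$ resolving set $S$, read cyclically (resp.\ linearly), obeys three constraints. First, at most one vertex may lie at distance $>k$ from all of $S$, since two such vertices share the code $(\textbf{k+1})_{|S|}$; hence every gap has size $\le 2k$, with at most one exception of size $2k+1$. Second, the reflection constraint: no landmark may have both neighbouring gaps of size $\ge k+1$, for then the two vertices at equal distance $j\le k$ on its two sides would be covered by it alone and would collide. Thus no two consecutive gaps are both $\ge k+1$. Maximising the vertex count $|S|+\sum_i g_i$ subject to these constraints is achieved by the alternating profile $2k,k,2k,k,\dots$ (one gap promoted to $2k+1$), giving $n\le |S|\cdot\tfrac{3k+2}{2}+1$; carrying out the integer optimisation while tracking the parity of $|S|$ and the single exception reproduces exactly the minima $2q,2q+1,2q+2$ above. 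For the path I would repeat the count with the two end-segments treated as reflection-free ``half-gaps'' (a landmark placed at a leaf incurs no mirror penalty and may carry a long adjacent gap), and then check that this optimum coincides with the cycle optimum once $n\ge 3k+4$.

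The main obstacle is the exact integer bookkeeping in case (c). The asymptotic count $\tfrac{2n}{3k+2}$ is forced quickly by the coverage and reflection constraints, but converting it into the precise floor expressions—and in particular locating the numerator switch at $\lceil\tfrac{3k+5}{2}\rceil$—requires the optimisation of the gap profile to be carried out to the last unit, including correct handling of the single permitted saturated vertex and of the parity of $|S|$. The conceptual heart, namely that no two consecutive gaps may be large, is easy to state; the most delicate point is aligning the path and cycle constants (the path's reflection-free ends against its extra region) so that the two families yield identical values for every $n\ge 3k+4$.
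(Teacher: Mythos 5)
This statement is quoted from reference \cite{distKdim} (``Truncated metric dimension for finite graphs''); the present paper contains no proof of it, so there is no in-paper argument to compare yours against. Judged on its own terms, your outline is the natural and, as far as I can tell, correct route: parts (a) and (b) are handled exactly as one should, and for (c) you have identified the two constraints that actually govern the problem --- (i) the saturation constraint, that at most one vertex may have code $(\mathbf{k+1})_{|S|}$, which caps every gap at $2k$ with a single $2k+1$ exception, and (ii) the reflection constraint, that no two consecutive gaps can both have size $\ge k+1$ --- together with the matching alternating $k,2k$ construction. I verified that optimizing $n$ over gap profiles subject to these two constraints, split by the parity of $|S|$, does reproduce the step function $2q,\,2q+1,\,2q+2$ on the residue ranges $\{0,1\}$, $[2,k+2]$, $[k+3,3k+1]$, and that the three floor expressions in the statement are just a re-encoding of that step function (the numerator $4k-1$ exists only to lift the floor by one on the middle band); so the skeleton closes.

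Two cautions. First, your justification of the reflection constraint (``the two vertices at equal distance $j\le k$ on its two sides would be covered by it alone'') is literally false for $j\ge 2$ when the adjacent gaps are barely large (e.g.\ a vertex at distance $2$ inside a gap of size $k+1$ is within distance $k$ of the far landmark); the constraint is nonetheless correct because the $j=1$ pair already collides, and you should argue it via that single witness. Second, everything after ``carrying out the integer optimisation'' and ``check that this optimum coincides with the cycle optimum'' is where a referee would demand the actual work: the odd-$|S|$ profile on a cycle cannot alternate perfectly, the $2k+1$ exception may sit either in an interior gap or (for paths) in an end segment, and the path's two reflection-free end segments of length up to $k$ must be shown to exactly compensate for its missing wrap-around gap once $n\ge 3k+4$. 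You have correctly located all of these issues and none of them is an obstruction, but as written the proposal is a sound plan rather than a finished proof of the displayed formula.
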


\begin{proposition}
Let $G=P_n$ for $n\ge 2$ or $G=C_n$ for $n\ge 3$. For any $k\in\mathbb{Z}^+$, 
$$\gamma_L^k(G)=\left\{
\begin{array}{ll}
\dim_k(G)+1 & \mbox{if } G\in\{P_n, C_n\} \mbox{ and } n\equiv1 \!\!\pmod{(3k+2)},\\ 
{} & \mbox{or } G=P_n \mbox{ and } n\equiv k+2 \!\!\pmod{(3k+2)},\\
{} & \mbox{or } G=C_n, n\ge3k+4, \mbox{ and } n\equiv k+2 \!\!\pmod{(3k+2)},\\
\dim_k(G) & \mbox{otherwise}.
\end{array}\right.$$
\end{proposition}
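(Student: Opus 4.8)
The plan is to reduce the entire statement to a single question — whether some \emph{minimum} distance-$k$ resolving set can simultaneously dominate — and then settle that question residue class by residue class. By Proposition~\ref{parameter_gap}(a) we already know $\gamma_L^k(G)-\dim_k(G)\in\{0,1\}$, so for each $G$ and each residue of $n$ modulo $3k+2$ the only thing to decide is whether this gap is $0$ or $1$. Following the proof of Theorem~\ref{b_main}, a minimum distance-$k$ resolving set $S$ admits at most one vertex $w$ with $\code_{S,k}(w)=(\mathbf{k+1})_{|S|}$, and $S$ is itself a distance-$k$ locating-dominating set exactly when no such $w$ exists, i.e.\ when every vertex lies within distance $k$ of $S$. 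Hence $\gamma_L^k(G)=\dim_k(G)$ iff \emph{some} minimum distance-$k$ resolving set is also a distance-$k$ dominating set, while $\gamma_L^k(G)=\dim_k(G)+1$ iff \emph{every} minimum distance-$k$ resolving set leaves exactly one vertex at distance $>k$. I would first dispose of the degenerate regimes: when $\dim_k(G)=1$ (short paths) via Theorem~\ref{loc_dom_characterization}(a), and when $k\ge\diam(G)$ via Observation~\ref{obs_mixed2}(b), where the gap is automatically $0$.

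Next I would translate both properties into the geometry of landmark placement. Domination of $S$ is equivalent to the radius-$k$ balls about the landmarks covering $V(G)$; writing the consecutive gaps (cyclically for $C_n$, with two end-segments for $P_n$), a vertex interior to a gap of length $g$ is undominated precisely when it is at distance $>k$ from both bracketing landmarks, i.e.\ when $g\ge 2k+2$, and for $P_n$ one must additionally cover the two end-segments. The obstruction to simultaneously resolving comes from two collision mechanisms that I would make explicit: first, any two ``invisible'' vertices — interior to gaps and beyond distance $k$ from all landmarks — share the code $(\mathbf{k+1})_{|S|}$ and so collide, limiting such vertices to at most one; second, a landmark whose nearest neighbors are too far assigns equal truncated distances to the vertices symmetric about it on its two sides, so these collide unless some second landmark sees one of them. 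These two mechanisms are exactly what forces $\dim_k(G)$ above $\gamma_k(G)$ and explain why a ``spread-out'' dominating set may fail to resolve (as for the antipodal $2$-sets of $C_6$).

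The decisive step is then arithmetic, driven by the explicit minimum distance-$k$ resolving sets of $P_n$ and $C_n$ underlying Theorem~\ref{kdim_pathcycle} (from~\cite{distKdim}), from which I would read off the admissible gap profiles of minimum resolving sets. Using the exact values of $\dim_k$ there, I would check, for each residue of $n\bmod(3k+2)$, whether $\dim_k(G)$ landmarks can be arranged with all gaps at most $2k+1$ and short end-segments. In the residues $n\equiv 1$ (both $P_n$ and $C_n$), and $n\equiv k+2$ (all $P_n$, and $C_n$ with $n\ge 3k+4$), I would show that realizing the minimum count forces either one over-long gap or an uncovered end, so an undominated vertex is unavoidable and the gap is $1$; in every other residue I would exhibit an evenly spread minimum resolving set that also dominates (as with the placement $\{u_1,u_3,u_5\}$ verifying $\gamma_L^1(C_7)=\dim_1(C_7)=3$), giving gap $0$. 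The asymmetry between the $P_n$ and $C_n$ lists — in particular why $n\equiv k+2$ appears for all paths but only for large cycles — I would trace to the boundary effect at the two ends of a path versus the wrap-around of a cycle, together with the separate small-$n$ regime where $\dim_k(C_n)=2$.

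The main obstacle is the lower-bound direction in the exceptional residues: proving that \emph{every} minimum distance-$k$ resolving set, not merely the canonical construction, fails to dominate. This needs the full structural classification of minimum distance-$k$ resolving sets of $P_n$ and $C_n$ (their possible gap profiles), rather than a single witness, and this is where I expect the real work to concentrate; the matching dominating constructions in the non-exceptional residues and all the modular bookkeeping should then be routine. A concrete sanity check for the smallest exceptional family is $C_{3k+3}$ (e.g.\ $C_6$ when $k=1$), where the only dominating $2$-sets are the antipodal ones, which do not resolve, while every resolving $2$-set leaves one vertex at distance $>k$, forcing $\gamma_L^k=\dim_k+1$ in agreement with $n\equiv 1\pmod{3k+2}$.
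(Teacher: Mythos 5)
Your proposal follows essentially the same route as the paper: dispose of the small cases ($n\le 3k+3$) directly, then reduce everything to whether \emph{some} minimum distance-$k$ resolving set of $P_n$ or $C_n$ is also distance-$k$ dominating, deciding this residue class by residue class modulo $3k+2$. The one step you flag as ``the real work'' --- showing that in the exceptional residues \emph{every} minimum distance-$k$ resolving set leaves an undominated vertex --- is exactly the step the paper does not prove either but delegates to the structural results on minimum distance-$k$ resolving sets of paths and cycles in \cite{distKdim}, so your outline matches the published argument.
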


\begin{proof}
Let $G=P_n$ for $n\ge 2$ or $G=C_n$ for $n\ge 3$. Let $k\in\mathbb{Z}^+$. 

If $2\le n \le k+1$, then $\gamma_L^k(P_n)=\dim_k(P_n)=1$ by Theorems~\ref{loc_dom_characterization}(a) and~\ref{kdim_pathcycle}(a). If $n=k+2$, then $\gamma_L^k(P_{k+2})=\dim_k(P_{k+2})+1=2$ by Theorems~\ref{b_main},~\ref{loc_dom_characterization}(a) and~\ref{kdim_pathcycle}(a). If $k+3 \le n \le 3k+2$ and $P_n$ is obtained from $C_n$, given by $u_0, u_1, \ldots, u_{n-1}, u_0$, by deleting the edge $u_ku_{k+1}$, then $\{u_0, u_{\alpha}\}$, where $\alpha=\min\{2k+1,n-1\}$, forms a distance-$k$ locating-dominating set of $P_n$, and thus $\gamma_L^k(P_n)=\dim_k(P_n)=2$ by Theorems~\ref{b_main} and~\ref{kdim_pathcycle}(b). If $3\le n\le 3k+2$ and $C_n$ is given by $u_0, u_1, \ldots, u_{n-1}, u_0$, then $\{u_0, u_{\alpha}\}$, where $\alpha=\min\{2k+1,n-1\}$, forms a distance-$k$ locating-dominating set of $C_n$, and thus $\gamma_L^k(C_n)=\dim_k(C_n)=2$ Theorems~\ref{b_main} and~\ref{kdim_pathcycle}(b). If $n=3k+3$, then, for any minimum distance-$k$ resolving set $R$ of $G\in\{P_{3k+3}, C_{3k+3}\}$, there is a vertex $w$ in $G$ with $\code_{R,k}(w)=(k+1,k+1)$; thus, $\gamma_L^k(G)=\dim_k(G)+1=3$ by Theorem~\ref{b_main}. 

Now, suppose $n\ge 3k+4$, and let $G\in\{P_n, C_n\}$; then $\dim_k(G)\ge 3$. Let $S$ be any minimum distance-$k$ resolving set of $G$. First, suppose that $|S|$ is odd. If $n\not\equiv k+2 \pmod{(3k+2)}$, then there exists a minimum distance-$k$ resolving set $S_0$ of $G$ such that $S_0$ is also a distance-$k$ dominating set of $G$ (see~\cite{distKdim}); thus, $\gamma_L^k(G)=\dim_k(G)$. If $n \equiv k+2 \pmod{(3k+2)}$, then there exists a vertex $w$ in $G$ with $\code_{R,k}(w)=(\textbf{k+1})_{|R|}$ for any minimum distance-$k$ resolving set $R$ of $G$ (see~\cite{distKdim}); thus, $\gamma_L^k(G)=\dim_k(G)+1$. Second, suppose $|S|$ is even. If $n \not\equiv 1 \pmod{(3k+2)}$, then there exists a minimum distance-$k$ resolving set $S_1$ of $G$ such that $S_1$ is also a distance-$k$ dominating set of $G$ (see~\cite{distKdim}); thus, $\gamma_L^k(G)=\dim_k(G)$. If $n \equiv 1 \pmod{(3k+2)}$, then there exists a vertex $w$ in $G$ with $\code_{S,k}(w)=(\textbf{k+1})_{|S|}$ for any minimum distance-$k$ resolving set $S$ of $G$ (see~\cite{distKdim}); thus, $\gamma_L^k(G)=\dim_k(G)+1$.~\hfill
\end{proof}

Based on the proof of Theorem~\ref{b_main}, we note that $\gamma_L^k(G)= \dim_k(G)+1$ if and only if, for every minimum distance-$k$ resolving set $S$ of $G$, there exists a vertex $w\in V(G)-S$ with $d(w, S)>k$. In other words, if there exists a minimum distance-$k$ resolving set $S'$ of $G$ such that $d(v, S') \le k$ for each $v\in V(G)$, then $\gamma_L^k(G)=\dim_k(G)$. 

\begin{question}
Since $\dim_k(G)\leq \gamma_L^k(G)\leq\dim_k(G)+1$, can we characterize $G$ for which each of the two (end) inequalities is an equality?
\end{question}


\section{The effect of edge deletion on $\gamma_L^k(G)$}\label{sec_edge}

In this section, we examine the effect of edge deletion on the distance-$k$ location-domination number of graphs. Throughout the section, let both $G$ and $G-e$, where $e\in E(G)$, be connected graphs. For the effect of edge deletion on the metric dimension of graphs, we refer to~\cite{joc}. We recall how the distance-$k$ dimension of a graph changes upon deletion of an edge. 

\begin{theorem}\label{T+e}
Let $G$ be a connected graph with $e \in E(G)$, and let $k\in\mathbb{Z}^+$. Then  
\begin{itemize}
\item[(a)] \emph{\cite{broadcast, juan_again}} $\dim_1(G)-1 \le \dim_1(G-e) \le \dim_1(G)+1$;
\item[(b)] \emph{\cite{distKdim}} $\dim_2(G-e) \le \dim_2(G)+1$; 
\item[(c)] \emph{\cite{distKdim}} for $k\ge3$, $\dim_k(G-e)\le \dim_k(G)+2$;
\item[(d)] \emph{\cite{distKdim}} for $k\ge 2$, $\dim_k(G)-\dim_k(G-e)$ can be arbitrarily large.
\end{itemize}
\end{theorem}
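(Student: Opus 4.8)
The unifying principle I would build on is that deleting an edge $e=xy$ can only increase distances, $d_{G-e}(u,v)\ge d_G(u,v)$, with equality whenever some $G$-geodesic from $u$ to $v$ already avoids $e$; a truncated distance $d_k$ strictly increases only when \emph{every} shortest $u$–$v$ path runs through $e$, and any such path must cross $e$ through its endpoints $x$ and $y$. This localizes every disruption of codes to geodesics passing through $e$. Throughout I would compare the distance-$k$ codes $\code$ computed in $G$ and in $G-e$, noting that for a fixed landmark $s$ the entry $d_k^{G-e}(v,s)$ can differ from $d_k^{G}(v,s)$ only when all $v$–$s$ geodesics of $G$ used $e$.

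For the upper bounds (a)--(c) the plan is to take a minimum distance-$k$ resolving set $S$ of $G$ and to show that $S'=S\cup\{x,y\}$ is a distance-$k$ resolving set of $G-e$, which immediately yields $\dim_k(G-e)\le\dim_k(G)+2$ and hence (c). The crux is a \emph{reconstruction} step once $x,y\in S'$: a shortest $u$–$s$ path in $G$ either survives in $G-e$ or crosses $e$, in which case its length is governed by the distances to $x$ and $y$; thus $d^{G}(u,s)$ is recoverable from the three $G-e$ quantities $d^{G-e}(u,s),\,d^{G-e}(u,x),\,d^{G-e}(u,y)$. Consequently, two vertices $u,v$ that agreed on every entry of $S'$ in $G-e$ (in particular on the $x$- and $y$-entries) would necessarily agree on every entry of $S$ in $G$, contradicting that $S$ resolves $G$. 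For $k\le 2$ I expect to sharpen the added constant from $2$ to $1$, as claimed in (a) and (b): the truncation cap $k+1$ is then so small that a single reroute direction (one endpoint) already records every change deletion can produce. For $k=1$ this is especially clean, since $d_1$ only reads adjacency and deleting $e$ alters the adjacency of the single pair $(x,y)$ and of no other pair, so only the codes of $x$ and $y$ change, and each in a single coordinate; adjoining one endpoint repairs everything.

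The lower bound in (a), $\dim_1(G-e)\ge\dim_1(G)-1$, I would obtain by running the same argument in reverse on $G=(G-e)+e$: restoring $e$ changes the distance-$1$ code of only $x$ and $y$, so a minimum distance-$1$ resolving set $R$ of $G-e$, after adjoining one endpoint of $e$, resolves $G$, giving $\dim_1(G)\le\dim_1(G-e)+1$. I would not expect a two-sided bound for $k\ge2$, precisely because part (d) asserts that the downward jump can be arbitrarily large.

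For (d) the plan is to exhibit, for each $k\ge 2$, an explicit family $\{G_m\}$ with a designated edge $e_m$ such that $\dim_k(G_m)\to\infty$ while $\dim_k(G_m-e_m)$ stays bounded. The mechanism I would engineer is \emph{differential rerouting}: arrange a cluster of vertices that are mutually unresolved under $d_k$ in $G_m$, so each pair consumes a landmark and $\dim_k(G_m)$ is forced large, yet whose shortest paths to a fixed few landmarks all traverse $e_m$ with pairwise \emph{distinct} reroute costs, so that deleting $e_m$ hands them pairwise distinct codes against a bounded resolving set. I expect the bulk of the work, and the main obstacle, to be the two dimension computations: a clean lower bound $\dim_k(G_m)\ge f(m)$ with $f(m)\to\infty$ (most naturally via an Observation~\ref{obs_kdim}-type argument producing many disjoint distance-forced obstructions), together with an explicit small distance-$k$ resolving set of $G_m-e_m$ certifying that its dimension remains bounded.
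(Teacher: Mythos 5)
First, note that the paper does not prove Theorem~\ref{T+e} at all: all four parts are quoted from \cite{broadcast, juan_again} and \cite{distKdim}, so your proposal must stand on its own. Parts (a) and (c) of your outline do work. For (a), deleting $e=xy$ changes $d_1(u,s)$ only for the single pair $\{u,s\}=\{x,y\}$, so a minimum distance-$1$ resolving set $S$ of $G$ survives unchanged unless exactly one endpoint of $e$ lies in $S$, in which case adjoining the \emph{other} endpoint repairs it (the adjoined vertex is resolved by its own zero coordinate and every other code is untouched); the lower bound is the symmetric argument for $(G-e)+e$. For (c), $S\cup\{x,y\}$ works, but you should make the truncation step explicit: from $d^{G}(u,s)=\min\{d^{G-e}(u,s),\ d^{G-e}(u,x)+1+d^{G-e}(y,s),\ d^{G-e}(u,y)+1+d^{G-e}(x,s)\}$ one gets that $d_k^{G}(u,s)$ is determined by $d_k^{G-e}(u,s)$, $d_k^{G-e}(u,x)$, $d_k^{G-e}(u,y)$ and the constants $d^{G-e}(x,s)$, $d^{G-e}(y,s)$, because $\min\{a+c,k+1\}$ depends only on $\min\{a,k+1\}$ once the constant $c\ge 0$ is fixed; equal $(S\cup\{x,y\})$-codes in $G-e$ then force equal $S$-codes in $G$, a contradiction.

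The genuine gaps are in (b) and (d). For (b) you offer only the heuristic that the truncation cap is so small that one endpoint records every change; this is not an argument, it is precisely where the $k=2$ and $k\ge 3$ cases diverge, and your reconstruction only yields the weaker bound $\dim_2(G-e)\le\dim_2(G)+2$. Concretely, when $x,y\notin S$ and you adjoin only $x$: for $u\notin\{x,y\}$ every pair $(u,s)$ whose $d_2$-value changes contains $x$ or $y$, so the $S$-code of $u$ is unchanged, but the $S$-code of $y$ can change in several coordinates (every landmark $s$ with $d^G(y,s)\le 2$ all of whose $G$-geodesics to $y$ pass through $x$), and nothing guarantees that the new $x$-coordinate separates $y$ from a vertex $u$ with which it now collides; establishing the $+1$ bound requires a genuine case analysis or a smarter choice of augmenting vertex, neither of which you supply. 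For (d) you describe the intended mechanism but exhibit no family and prove neither dimension computation, so the statement is not established; the paper records the required construction in Fig.~\ref{fig_kdim_edge} (a graph with $\dim_k(G)=2a$ and $\dim_k(G-e)=a+1$ for $k\ge 2$), and any complete write-up must actually verify those two values, which is where the real work lies.
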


\begin{theorem}
Let $G$ be a connected graph with $e\in E(G)$, and let $k\in\mathbb{Z}^+$. Then
\begin{itemize}
\item[(a)] $\gamma_L^1(G)-2\le \gamma_L^1(G-e) \le \gamma_L^1(G)+2$;
\item[(b)] $\gamma_L^2(G-e) \le \gamma_L^2(G)+2$;
\item[(c)] for $k\ge 3$,  $\gamma_L^k(G-e) \le \gamma_L^k(G)+3$.
\end{itemize}
\end{theorem}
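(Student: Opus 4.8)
The plan is to reduce every inequality to the two established ingredients. First, Theorem~\ref{b_main} gives, for every connected graph $H$ and every $k\in\mathbb{Z}^+$, the two-sided sandwich
$\dim_k(H)\le \gamma_L^k(H)\le \dim_k(H)+1$
(the left inequality because $\gamma_L^k(H)\ge\max\{\gamma_k(H),\dim_k(H)\}\ge\dim_k(H)$, the right because $\gamma_L^k(H)\le\dim_k(H)+1$). Second, Theorem~\ref{T+e} records how $\dim_k$ moves under edge deletion. My whole strategy is to apply the sandwich once to $H=G$ and once to $H=G-e$, inserting the appropriate estimate from Theorem~\ref{T+e} in between; each claimed bound then collapses to arithmetic on the additive constants.

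For the upper bounds I would run the chain in the direction $G-e \rightsquigarrow \dim_k(G-e) \rightsquigarrow \dim_k(G) \rightsquigarrow G$: start from the upper half of the sandwich on $G-e$, namely $\gamma_L^k(G-e)\le \dim_k(G-e)+1$; bound $\dim_k(G-e)$ from above via Theorem~\ref{T+e}; then pass from $\dim_k(G)$ up to $\gamma_L^k(G)$ using the lower half $\dim_k(G)\le\gamma_L^k(G)$. Concretely, for $k=1$ this gives $\gamma_L^1(G-e)\le(\dim_1(G)+1)+1\le\gamma_L^1(G)+2$; for $k=2$, Theorem~\ref{T+e}(b) yields $\gamma_L^2(G-e)\le(\dim_2(G)+1)+1\le\gamma_L^2(G)+2$; and for $k\ge3$, Theorem~\ref{T+e}(c) yields $\gamma_L^k(G-e)\le(\dim_k(G)+2)+1\le\gamma_L^k(G)+3$. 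This settles all three upper bounds.

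The lower bound is required only in part~(a), because Theorem~\ref{T+e} furnishes a two-sided estimate solely for $k=1$. There I would reverse the roles: begin with the lower half of the sandwich on $G-e$, $\gamma_L^1(G-e)\ge\dim_1(G-e)$, apply $\dim_1(G-e)\ge\dim_1(G)-1$ from Theorem~\ref{T+e}(a), and finally convert $\dim_1(G)$ into $\gamma_L^1(G)$ through the upper half $\gamma_L^1(G)\le\dim_1(G)+1$, i.e.\ $\dim_1(G)\ge\gamma_L^1(G)-1$. Chaining gives $\gamma_L^1(G-e)\ge(\gamma_L^1(G)-1)-1=\gamma_L^1(G)-2$, completing part~(a).

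There is no genuine obstacle: the proof is bookkeeping of constants, and the only thing demanding care is applying the correct half of the $\dim_k$--$\gamma_L^k$ sandwich on the correct graph in each step. It is worth remarking explicitly why parts~(b) and~(c) carry no lower bound of the stated form. Since $\gamma_L^k(G)\ge\dim_k(G)$ and $\gamma_L^k(G-e)\le\dim_k(G-e)+1$, we get $\gamma_L^k(G)-\gamma_L^k(G-e)\ge\bigl(\dim_k(G)-\dim_k(G-e)\bigr)-1$, and the right-hand side is unbounded for $k\ge2$ by Theorem~\ref{T+e}(d). Hence $\gamma_L^k(G-e)$ can be arbitrarily much smaller than $\gamma_L^k(G)$, so no finite constant $c$ can give $\gamma_L^k(G-e)\ge\gamma_L^k(G)-c$ when $k\ge2$; this explains the asymmetry between part~(a) and parts~(b),(c).
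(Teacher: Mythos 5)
Your proof is correct and follows essentially the same route as the paper: both arguments combine the sandwich $\dim_k(H)\le\gamma_L^k(H)\le\dim_k(H)+1$ from Theorem~\ref{b_main} (applied to $G$ and to $G-e$) with the edge-deletion bounds on $\dim_k$ from Theorem~\ref{T+e}, and the arithmetic matches in every case. Your closing remark on why parts (b) and (c) admit no lower bound is also consistent with the paper, which proves exactly that unboundedness in the theorem immediately following.
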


\begin{proof}
Let $k\in\mathbb{Z}^+$. By Theorem~\ref{b_main}, we have $\dim_k(G) \le \gamma_L^k(G)\le \dim_k(G)+1$ and $\dim_k(G-e) \le \gamma_L^k(G-e)\le \dim_k(G-e)+1$.

For (a), note that $\gamma_L^1(G-e)-\gamma_L^1(G)\ge \dim_1(G-e)-(\dim_1(G)+1)\ge -2$ and $\gamma_L^1(G)-\gamma_L^1(G-e) \ge \dim_1(G)-(\dim_1(G-e)+1) \ge -2$ by Theorem~\ref{T+e}(a); thus, $\gamma_L^1(G)-2\le\gamma_L^1(G-e)\le\gamma_L^1(G) + 2$.

For (b), note that $\gamma_L^2(G)-\gamma_L^2(G-e)\ge \dim_2(G)-(\dim_2(G-e)+1)\ge -2$ by Theorem~\ref{T+e}(b); thus $\gamma_L^2(G-e)\le\gamma_L^2(G)+2$.

For (c), for any $k\ge 3$, we have $\gamma_L^k(G)-\gamma_L^k(G-e)\ge \dim_k(G)-(\dim_k(G-e)+1)\ge -3$ by Theorem~\ref{T+e}(c); thus $\gamma_L^k(G-e)\le\gamma_L^k(G)+3$.~\hfill
\end{proof}

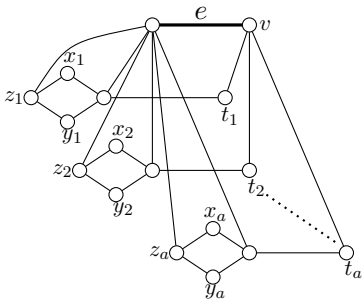
\begin{figure}[!ht]
\centering
\begin{tikzpicture}[scale=.645, transform shape]

\node [draw, shape=circle, scale=.8] (1) at  (3, 1.5) {};
\node [draw, shape=circle, scale=.8] (2) at  (5, 1.5) {};
\node [draw, shape=circle, scale=.8] (3) at  (2, 0) {};
\node [draw, shape=circle, scale=.8] (4) at  (4.5, 0) {};
\node [draw, shape=circle, scale=.8] (5) at  (3,-1.5) {};
\node [draw, shape=circle, scale=.8] (6) at  (5, -1.5) {};
\node [draw, shape=circle, scale=.8] (7) at  (5, -3.2) {};
\node [draw, shape=circle, scale=.8] (8) at  (7, -3.2) {};

\node [draw, shape=circle, scale=.8] (a1) at  (0.5, 0) {};
\node [draw, shape=circle, scale=.8] (a2) at  (1.25, 0.5) {};
\node [draw, shape=circle, scale=.8] (a3) at  (1.25, -0.5) {};

\node [draw, shape=circle, scale=.8] (b1) at  (1.5, -1.5) {};
\node [draw, shape=circle, scale=.8] (b2) at  (2.25, -1) {};
\node [draw, shape=circle, scale=.8] (b3) at  (2.25, -2) {};

\node [draw, shape=circle, scale=.8] (c1) at  (3.5, -3.2) {};
\node [draw, shape=circle, scale=.8] (c2) at  (4.25, -2.7) {};
\node [draw, shape=circle, scale=.8] (c3) at  (4.25, -3.7) {};

\node [scale=1.2] at (1.4,0.8) {$x_1$};
\node [scale=1.2] at (1.35,-0.8) {$y_1$};
\node [scale=1.2] at (2.4,-0.7) {$x_2$};
\node [scale=1.2] at (2.4,-2.3) {$y_2$};
\node [scale=1.2] at (4.3,-2.4) {$x_a$};
\node [scale=1.2] at (4.3,-4) {$y_a$};
\node [scale=1.2] at (4.6,-0.38) {$t_1$};
\node [scale=1.2] at (5.15,-1.85) {$t_2$};
\node [scale=1.2] at (7.15,-3.55) {$t_a$};
\node [scale=1.2] at (5.33,1.5) {$v$};
\node [scale=1.2] at (0.15,0) {$z_1$};
\node [scale=1.2] at (1.15,-1.5) {$z_2$};
\node [scale=1.2] at (3.15,-3.2) {$z_a$};

\node [scale=1.5] at (4,1.75) {\bf$e$};

\draw(1)--(3)--(4)--(2);\draw(1)--(5)--(6)--(2);\draw(1)--(7)--(8)--(2);
\draw(a1)--(a2)--(3)--(a3)--(a1);
\draw(a1) .. controls (1.25,1.15) .. (1);
\draw(b1)--(b2)--(5)--(b3)--(b1);\draw(b1)--(1);
\draw(c1)--(c2)--(7)--(c3)--(c1);\draw(c1)--(1);
\draw[very thick](1)--(2);

\draw[thick, dotted] (5.35,-2)--(6.8,-3);

\end{tikzpicture}
\caption{\small \cite{distKdim} Graphs $G$ such that $\dim_k(G) - \dim_k(G-e)$ can be arbitrarily large, where $k\ge 2$ and $a \ge 3$.}\label{fig_kdim_edge}
\end{figure}

\begin{theorem}
For any integer $k\ge2$, $\gamma_L^k(G)-\gamma_L^k(G-e)$ can be arbitrarily large.
\end{theorem}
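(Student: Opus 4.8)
The plan is to reuse the very same family of graphs already introduced in Theorem~\ref{T+e}(d) and depicted in Figure~\ref{fig_kdim_edge}, since that construction was designed precisely to force $\dim_k(G)-\dim_k(G-e)$ to be large for $k\ge 2$. My strategy is to leverage the sandwich inequalities $\dim_k(H)\le \gamma_L^k(H)\le \dim_k(H)+1$ from Theorem~\ref{b_main}, applied to both $H=G$ and $H=G-e$. From these we immediately get
\[
\gamma_L^k(G)-\gamma_L^k(G-e)\ \ge\ \dim_k(G)-\bigl(\dim_k(G-e)+1\bigr)\ =\ \bigl(\dim_k(G)-\dim_k(G-e)\bigr)-1.
\]
Since the right-hand side tends to infinity along the family in Figure~\ref{fig_kdim_edge}, the difference $\gamma_L^k(G)-\gamma_L^k(G-e)$ is likewise unbounded, and the loss of the additive $1$ is harmless.

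First I would fix $k\ge 2$ and take $G=G(a)$ to be the graph of Figure~\ref{fig_kdim_edge} with parameter $a\ge 3$, where $e$ is the distinguished edge $uv$ incident to the vertex $v$. Next I would invoke the cited result (Theorem~\ref{T+e}(d), established in~\cite{distKdim}) to record that $\dim_k\bigl(G(a)\bigr)-\dim_k\bigl(G(a)-e\bigr)\to\infty$ as $a\to\infty$; I will not reprove this, as it is exactly the statement being quoted. Then, applying Theorem~\ref{b_main} to $G(a)$ gives $\gamma_L^k(G(a))\ge \dim_k(G(a))$, while applying it to $G(a)-e$ gives $\gamma_L^k(G(a)-e)\le \dim_k(G(a)-e)+1$. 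Subtracting yields the displayed inequality above, and letting $a\to\infty$ completes the argument.

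The one point that deserves care, rather than being a genuine obstacle, is making sure the bound actually goes to infinity after subtracting the additive constant: if the quoted gap $\dim_k(G(a))-\dim_k(G(a)-e)$ grew without bound only in a degenerate way, the $-1$ could matter, but since it is stated to be arbitrarily large (i.e. unbounded as a function of $a$), any fixed subtraction of $1$ leaves an unbounded quantity. I expect the main subtlety to be purely bookkeeping: ensuring that the \emph{same} graph and \emph{same} edge realize the large gap in $\dim_k$ and that both $G$ and $G-e$ are connected (which is part of the standing hypothesis of the section). Once those are in place, no new combinatorial estimate is required, and the whole argument is a two-line consequence of Theorem~\ref{b_main} chained with Theorem~\ref{T+e}(d).
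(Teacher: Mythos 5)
Your proposal is correct and follows essentially the same route as the paper: both take the graph of Figure~\ref{fig_kdim_edge}, apply the sandwich $\dim_k(H)\le\gamma_L^k(H)\le\dim_k(H)+1$ from Theorem~\ref{b_main} to $G$ and $G-e$, and conclude from the known gap in distance-$k$ dimension. The only cosmetic difference is that the paper quotes the explicit values $\dim_k(G)=2a$ and $\dim_k(G-e)=a+1$ from~\cite{distKdim} (giving the lower bound $a-2$), whereas you invoke Theorem~\ref{T+e}(d) qualitatively; the substance is identical.
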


\begin{proof}
Let $G$ be the graph in Fig.~\ref{fig_kdim_edge} with $a\ge 3$. It was shown in~\cite{distKdim} that, for any $k\ge 2$, $\dim_k(G)=2a$ and $\dim_k(G-e)=a+1$. For $k\ge 2$, $\gamma_L^k(G)\ge \dim_k(G)=2a$ and $\gamma_L^k(G-e) \le \dim_k(G-e)+1=a+2$ by Theorem~\ref{b_main}; thus, $\gamma_L^k(G)-\gamma_L^k(G-e) \ge 2a-(a+2)=a-2 \rightarrow\infty$ as $a\rightarrow \infty$.~\hfill
\end{proof}

\textbf{Acknowledgement.} We much appreciate the anonymous referees for their careful reading, the correction of an error, and helpful comments which improved the paper.


\end{document}